\newtheorem{theorem}{Theorem}
\theoremstyle{plain}
\newtheorem{definition}{Definition}
\newtheorem{example}{Example}
\newtheorem{proposition}{Proposition}
\newtheorem{remark}{Remark}
\numberwithin{equation}{section}
\begin{document}
\title{On topological structures of fuzzy parametrized soft sets}
\author{S. ATMACA}
\address{Cumhuriyet University, Department of Mathematics, 58140 Sivas /
Turkey}
\email{seatmaca@cumhuriyet.edu.tr}
\author{\.{I}. ZORLUTUNA}
\address{Cumhuriyet University, Department of Mathematics, 58140 Sivas /
Turkey}
\email{ izorlu@cumhuriyet.edu.tr}

\begin{abstract}
In this paper, we introduce the topological structure of fuzzy parametrized
soft sets and fuzzy parametrized soft mappings. We define the notion of
quasi-coincidence for fuzzy parametrized soft sets and investigated basic
properties of it. We study the closure, interior, base, continuity and
compactness and properties of these concepts in fuzzy parametrized soft
topological spaces.
\end{abstract}

\keywords{fuzzy parametrized soft set, fuzzy parametrized soft mapping,
topology}
\subjclass[2000]{ 03E72, 54C05, 54D30}
\maketitle

In 1965, after Zadeh \cite{Z} generalized the usual notion of a set with the
introduction of fuzzy set, the fuzzy set was carried out in the areas of
general theories and applied to many real life problems in uncertain,
ambiguous environment. In this manner, in 1968, Chang \cite{ch} gave the
definition of fuzzy topology and introduced the many topological notions in
fuzzy setting.

In 1999, Molodtsov \cite{M} introduced the concept of soft set theory which
is a comletely new approach for modelling uncertainty and pointed out
several directions for the applications of soft sets, such as game theory,
perron integrations, smoothness of functions and so on. To improve this
concept, many researchers applied this concept on topological spaces (e.g. 
\cite{ac1, chen, chen1, geor, sn, zo}), group theory, ring theory (e.g. \cite%
{Ak, AC, yc, F, yb}), and also decision making problems (e.g. \cite{CE, ce1,
dc, MB2}).

Recently, researchers have combined fuzzy set and soft set to generalize the
spaces and to solve more complicated problems. By this way, many interesting
applications of soft set theory have been expanded. First combination of
fuzzy set and soft set is fuzzy soft set and it was given by Maji and et al 
\cite{MB3}. Then fuzzy soft set theory has been applied in several
directions, such as topology (e.g. \cite{Az, Bh, TK, Ts}), various algebraic
structures (e.g. \cite{AA, io}) and especially decision making (e.g. \cite%
{fj, zk, arr, x2}). Another combination of fuzzy set and soft set was given
by \c{C}a\u{g}man and et al. \cite{ce2} and called it as fuzzy parametrized
soft set (as shortly FP-soft set). In that paper, \c{C}a\u{g}man and et al.
defined operations on FP-soft sets and improved several results. After that, 
\c{C}a\u{g}man and Deli \cite{Cd, Cd1} applied FP-soft sets to define some
decision making methods and applied these methods to problems that contain
uncertainties and fuzzy object.

In the present paper, we consider the topological structure of FP-soft sets.
Firstly, we give some basic ideas of FP-soft sets and also studied results.
We define FP-soft quasi-coincidence, as a generalization of
quasi-coincidence in fuzzy manner \cite{PM} and use this notion to
characterize concepts of FP-soft closure and FP-soft base in FP-soft
topological spaces. We also introduce the notion of mapping on FP-soft
classes and investigate the properties of FP-soft images and FP-soft inverse
images of FP-soft sets. We define FP-soft topology in Chang's sense. We
study the FP-soft closure and FP-soft interior operators and properties of
these concepts. Lastly we define FP-soft continuous mappings and we show
that image of a FP-soft compact space is also FP-soft compact.

\section{Preliminaries}

Throughout this paper $X$ denotes initial universe, $E$ denotes the set of
all possible parameters which are attributes, characteristic or properties
of the objects in $X$, and the set of all subsets of $X$ will be denoted by $%
P(X)$.

\begin{definition}
\cite{Z} A fuzzy set $A$ in $X$ is a function defined as follows:

$A=\{%
\mu
_{A}(x)/x:x\in X\}$\newline
where $%
\mu
_{A}:X\rightarrow \lbrack 0,1].$

Here $%
\mu
_{A}$ is called the membership function of $A$, and the value $%
\mu
_{A}(x)$ is called the grade of membership of $x\in X$. This value
represents the degree of $x$ belonging to the fuzzy set $A$.

A fuzzy point in $X$, whose value is $\alpha $ $(0<\alpha \leq 1)$ at the
support $x\in X,$ is denoted by $x_{\alpha }$. A fuzzy point $x_{\alpha }\in
A$, where $A$ is fuzzy set in $X$ iff $\alpha \leq \mu _{A}(x)$.
\end{definition}

\begin{definition}
\cite{M} A pair $(F,E)$ is called a soft set over $X$ if $F$ is a mapping
defined by $F:E\rightarrow P(X)$.

In the other words, a soft set is a parametrized family of subsets of the
set $X$. Each set $F(e)$, $e\in E$, from this family may be considered as
the set of $e$-elements of the soft set $(F,E)$.
\end{definition}

\begin{definition}
\cite{ce2} Let $A$ be a fuzzy set over $E$. An FP-soft set $F_{A}$ on the
universe $X$ is defined as follows:

$F_{A}=\{(%
\mu
_{A}(e)/e,f_{A}(e)):e\in E,f_{A}(e)\in P(X),%
\mu
_{A}(e)\in \lbrack 0,1]\}$,\newline
where the function $f_{A}:E\rightarrow P(X)$ is called approximate function
such that $f_{A}(e)=\varnothing $ if $%
\mu
_{A}(e)=0$, and the function $%
\mu
_{A}:E\rightarrow \lbrack 0,1]$ is called membership function of the set $A$.
\end{definition}

From now on, the set of all FP-soft sets over $X$\ will be denoted by $%
FPS(X,E)$\textbf{.}

\begin{definition}
\cite{ce2} Let $F_{A}\in FPS(X,E)$.

(1) $F_{A}$ is called the empty FP-soft set if $\mu _{A}(e)=0$ for all every 
$e\in E$, denoted by $F_{\varnothing }$.

(2) $F_{A}$ is called $A$-universal FP-soft set if $\mu _{A}(e)=1$ and $%
f_{A}(e)=X$ for all $e\in A$, denoted by $F\widetilde{_{A}}$.

If $A=E$, then $A$-universal FP-soft set is called universal FP-soft set,
denoted by $F\widetilde{_{E}}$.
\end{definition}

\begin{definition}
\cite{ce2} Let $F_{A},F_{B}\in FPS(X,E)$.

(1) $F_{A}$ is called a FP-soft subset of $F_{B}$ if $A\leq B$ and $%
f_{A}(e)\subseteq f_{B}(e)$ for every $e\in E$ and we write $F_{A}\widetilde{%
\subset }F_{B}$.

(2) $F_{A}$ and $F_{B}$ are said to be equal, denoted by $F_{A}=F_{B}$ if $%
F_{A}\widetilde{\subset }F_{B}$ and $F_{B}\widetilde{\subset }F_{A}$.

(3) The union of $F_{A}$ and $F_{B}$, denoted by $F_{A}\widetilde{\cup }%
F_{B} $, is the FP-soft set, defined by the membership and approximate
functions $\mu _{A\cup B}(e)=\max \{\mu _{A}(e),\mu _{B}(e)\}$ and $f_{A\cup
B}(e)=f_{A}(e)\cup g_{B}(e)$ for every $e\in E$, respectively.

(4) The intersection of $F_{A}$ and $F_{B}$, denoted by $F_{A}\widetilde{%
\cap }G_{B}$, is the FP-soft set, defined by the membership and approximate
functions $\mu _{A\cap B}(e)=\min \{\mu _{A}(e),\mu _{B}(e)\}$ and $f_{A\cap
B}(e)=f_{A}(e)\cap g_{B}(e)$ for every $e\in E$, respectively.
\end{definition}

\begin{definition}
\cite{ce2} Let $F_{A}\in FPS(X,E)$. Then the complement of $F_{A}$, denoted
by $F_{A}^{c}$, is the FP-soft set, defined by the membership and
approximate functions $\mu _{A^{c}}(e)=1-\mu _{A}(e)$ and $%
f_{A}^{c}(e)=X-f_{A}(e)$ for every $e\in E$, respectively.

Clearly $(F_{A}^{c})^{c}=F_{A}$, $F_{\widetilde{E}}^{c}=F_{\varnothing }$
and $F_{\varnothing }^{c}=F_{\widetilde{E}}$
\end{definition}

\begin{proposition}
\cite{ce2} Let $F_{A}$, $F_{B}$ and $F_{C}\in FPS(X,E)$. Then

(1)$\ (F_{A}\widetilde{\cup }F_{B})^{c}=F_{A}^{c}\widetilde{\cap }F_{B}^{c}$

(2)$\ (F_{A}\widetilde{\cap }F_{B})^{c}=F_{A}^{c}\widetilde{\cup }F_{B}^{c}$

(3) $F_{A}\widetilde{\cap }F_{A}=F_{A}$, $F_{A}\widetilde{\cup }F_{A}=F_{A}$

(4) $F_{A}\widetilde{\cap }F_{\varnothing }=F_{\varnothing }$, $F_{A}%
\widetilde{\cap }F_{\widetilde{E}}=F_{A}$

(5) $F_{A}\widetilde{\cap }F_{B}=F_{B}\widetilde{\cap }F_{A}$, $F_{A}%
\widetilde{\cup }F_{B}=F_{B}\widetilde{\cup }F_{A}$

(6) $F_{A}\widetilde{\cap }(F_{B}\widetilde{\cap }F_{C})=$ $(F_{A}\widetilde{%
\cap }F_{B})\widetilde{\cap }F_{C}$, $F_{A}\widetilde{\cup }(F_{B}\widetilde{%
\cup }F_{C})=$ $(F_{A}\widetilde{\cup }F_{B})\widetilde{\cup }F_{C}$

(7) $F_{A}\widetilde{\cup }F_{\varnothing }=F_{A}$, $F_{A}\widetilde{\cup }%
F_{\widetilde{E}}=F_{\widetilde{E}}$
\end{proposition}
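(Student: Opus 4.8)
The plan is to verify each of the seven items directly from the definitions of union, intersection, and complement given in Definitions~6 and~7, reducing every FP-soft identity to the corresponding pointwise identity for the membership functions $\mu_A,\mu_B,\mu_C:E\to[0,1]$ and for the approximate functions $f_A,f_B,f_C:E\to P(X)$. Since equality of FP-soft sets (Definition~5(2)) amounts to equality of both the membership functions and the approximate functions at every $e\in E$, it suffices to check, for each $e\in E$, one $[0,1]$-valued identity and one set-valued identity. The $[0,1]$-valued identities are the standard lattice laws for $\min$ and $\max$ (involution $1-(1-t)=t$, the two De~Morgan laws $1-\max\{s,t\}=\min\{1-s,1-t\}$ and $1-\min\{s,t\}=\max\{1-s,1-t\}$, idempotence, commutativity, associativity, and the boundary laws with $0$ and $1$), while the set-valued identities are the matching laws for $\cap$ and $\cup$ in $P(X)$ (De~Morgan with complement in $X$, idempotence, commutativity, associativity, and the boundary laws with $\varnothing$ and $X$).

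Concretely, I would organize the proof item by item. For (1), fix $e\in E$: the membership function of $(F_A\,\widetilde{\cup}\,F_B)^c$ is $1-\max\{\mu_A(e),\mu_B(e)\}$, which equals $\min\{1-\mu_A(e),1-\mu_B(e)\}$, the membership function of $F_A^c\,\widetilde{\cap}\,F_B^c$; simultaneously its approximate function is $X-(f_A(e)\cup f_B(e))=(X-f_A(e))\cap(X-f_B(e))$, the approximate function of $F_A^c\,\widetilde{\cap}\,F_B^c$. Item (2) is the dual computation. Items (3), (5), (6) follow from idempotence, commutativity, and associativity of $\min/\max$ on $[0,1]$ together with the same properties of $\cap/\cup$ on $P(X)$. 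For the boundary items (4) and (7) I would invoke Definition~4: $F_\varnothing$ has $\mu_{\varnothing}(e)=0$ and (by the constraint in Definition~3) $f_\varnothing(e)=\varnothing$ for every $e$, while $F_{\widetilde{E}}$ has $\mu_{\widetilde{E}}(e)=1$ and $f_{\widetilde{E}}(e)=X$ for every $e$; then $\min\{\mu_A(e),0\}=0$, $\min\{\mu_A(e),1\}=\mu_A(e)$, $\max\{\mu_A(e),0\}=\mu_A(e)$, $\max\{\mu_A(e),1\}=1$, and the parallel absorption facts $f_A(e)\cap\varnothing=\varnothing$, $f_A(e)\cap X=f_A(e)$, $f_A(e)\cup\varnothing=f_A(e)$, $f_A(e)\cup X=X$ close each case.

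There is essentially no hard step here; the proof is a routine componentwise check, and the only point requiring a moment's care is bookkeeping in the boundary cases (4) and (7), where one must use both halves of Definition~4 (the membership value \emph{and} the approximate-function value of $F_\varnothing$ and $F_{\widetilde{E}}$) and the side condition in Definition~3 forcing $f_\varnothing(e)=\varnothing$, so that the approximate-function identities really do hold and not merely the membership identities. Accordingly I would state the proof compactly: remark that all identities reduce to the pointwise lattice laws on $[0,1]$ and on $(P(X),\cap,\cup,{}^c)$, display the two De~Morgan computations for (1) and (2) explicitly as above, and dispatch (3)--(7) by citing idempotence, commutativity, associativity, and the unit/zero laws in both lattices.
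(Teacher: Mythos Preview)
Your proposal is correct: the componentwise verification via the pointwise lattice laws on $[0,1]$ and on $(P(X),\cap,\cup,{}^c)$ is exactly the right (and only natural) approach, and your attention to the side condition in Definition~3 forcing $f_\varnothing(e)=\varnothing$ in the boundary cases is the one genuine point of care. Note, however, that the paper does not give its own proof of this proposition at all---it is quoted from \cite{ce2} and stated without proof---so there is no argument in the paper to compare against; your write-up simply supplies what the paper omits.
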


\section{some properties of FP-soft sets and FP-soft mappings}

\begin{definition}
Let $J$ be an arbitrary index set and $F_{A_{i}}\in FPS(X,E)$ for all $i\in
J $.

(1) The union of $F_{A_{i}}$'s, denoted by $\widetilde{\underset{i\in J}{%
{\LARGE \cup }}}F_{A_{i}}$, is the FP-soft set, defined by the membership
and approximate functions $\mu _{\underset{i\in J}{{\LARGE \cup }}A_{i}}(e)=%
\underset{i\in J}{\sup }\{\mu _{A_{i}}(e)\}$ and $f_{\underset{i\in J}{%
{\LARGE \cup }}A_{i}}(e)=\underset{i\in J}{{\LARGE \cup }}f_{A_{i}}(e)$ for
every $e\in E$, respectively.

(2) The intersection of $F_{A_{i}}$'s, denoted by $\widetilde{\underset{i\in
J}{{\LARGE \cap }}}F_{A_{i}}$, is the FP-soft set, defined by the membership
and approximate functions $\mu _{\underset{i\in J}{{\LARGE \cap }}A_{i}}(e)=%
\underset{i\in J}{\inf }\{\mu _{A_{i}}(e)\}$ and $f_{\underset{i\in J}{%
{\LARGE \cap }}A_{i}}(e)=\underset{i\in J}{{\LARGE \cap }}f_{A_{i}}(e)$ for
every $e\in E$, respectively.
\end{definition}

\begin{proposition}
Let $J$ be an arbitrary index set and $F_{A_{i}}\in FPS(X,E)$ for all $i\in
J $. Then

(1)$\ (\widetilde{\underset{i\in J}{{\LARGE \cup }}}F_{A_{i}})^{c}=%
\widetilde{\underset{i\in J}{{\LARGE \cap }}}F_{A_{i}}^{c}$.

(2)$\ (\widetilde{\underset{i\in J}{{\LARGE \cap }}}F_{A_{i}})^{c}=%
\widetilde{\underset{i\in J}{{\LARGE \cup }}}F_{A_{i}}^{c}$.
\end{proposition}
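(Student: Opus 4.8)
The plan is to prove both De Morgan laws for arbitrary families by reducing everything to the definitions of complement, arbitrary union, and arbitrary intersection, and then checking equality of FP-soft sets via Definition (the one on equality), i.e.\ by showing the membership functions agree and the approximate functions agree at every parameter $e\in E$. Since a complement of an FP-soft set replaces $\mu$ by $1-\mu$ and $f$ by $X\setminus f$, the entire argument splits into two independent coordinate-wise verifications: one on the unit-interval-valued side (which is a statement about $\sup$, $\inf$ and the order-reversing map $t\mapsto 1-t$) and one on the $P(X)$-valued side (which is the classical set-theoretic De Morgan law for an arbitrary family of subsets of $X$).

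For part (1), I would fix $e\in E$ and compute the membership function of the left-hand side: by definition of complement followed by the definition of arbitrary union, $\mu_{(\widetilde{\cup}_{i}F_{A_i})^{c}}(e)=1-\sup_{i\in J}\mu_{A_i}(e)$. On the right-hand side, by the definition of arbitrary intersection applied to the family $\{F_{A_i}^{c}\}$, $\mu_{\widetilde{\cap}_{i}F_{A_i}^{c}}(e)=\inf_{i\in J}\bigl(1-\mu_{A_i}(e)\bigr)$. So the membership equality is exactly the identity $1-\sup_{i\in J}t_i=\inf_{i\in J}(1-t_i)$ for any family $\{t_i\}\subseteq[0,1]$, which follows because $t\mapsto 1-t$ is an order-reversing bijection of $[0,1]$ and hence sends suprema to infima. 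For the approximate functions, the left-hand side gives $X\setminus\bigcup_{i\in J}f_{A_i}(e)$ and the right-hand side gives $\bigcap_{i\in J}\bigl(X\setminus f_{A_i}(e)\bigr)$, and these coincide by the ordinary De Morgan law for subsets of $X$. Since both the membership and the approximate functions agree for every $e\in E$, the two FP-soft sets are equal. Part (2) is entirely symmetric: swap the roles of $\sup$ and $\inf$ and of $\cup$ and $\cap$, using $1-\inf_{i}t_i=\sup_{i}(1-t_i)$ and the dual classical De Morgan law $X\setminus\bigcap_i f_{A_i}(e)=\bigcup_i\bigl(X\setminus f_{A_i}(e)\bigr)$; alternatively, (2) can be deduced from (1) by applying (1) to the family $\{F_{A_i}^{c}\}$ and using the involution $(F_{A}^{c})^{c}=F_{A}$ from the complement definition.

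I do not expect any genuine obstacle here: this is a routine generalization of Proposition (the earlier one, parts (1)--(2)) from two FP-soft sets to an arbitrary family, and the only thing to be slightly careful about is the passage from $\max/\min$ over a finite pair to $\sup/\inf$ over an arbitrary index set $J$ — but since all values lie in the complete lattice $[0,1]$, the identities $1-\sup = \inf(1-\,\cdot\,)$ and $1-\inf=\sup(1-\,\cdot\,)$ hold for arbitrary (including empty or infinite) families, so nothing breaks. The write-up would therefore be short: state that equality of FP-soft sets is checked coordinatewise, do the $[0,1]$-side computation, invoke classical De Morgan on the $P(X)$-side, and note the symmetry between (1) and (2).
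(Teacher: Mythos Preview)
Your proposal is correct and follows essentially the same approach as the paper: the paper also fixes $e\in E$, computes the membership side via $1-\sup_{i}\mu_{A_i}(e)=\inf_{i}(1-\mu_{A_i}(e))$ and the approximate side via the classical De Morgan law $X\setminus\bigcup_i f_{A_i}(e)=\bigcap_i(X\setminus f_{A_i}(e))$, and then says the other part is similar. Your additional remark that (2) can alternatively be obtained from (1) via the involution $(F_A^c)^c=F_A$ is a minor extra observation not in the paper, but the core argument is the same.
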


\begin{proof}
(1) Put $F_{B}=(\widetilde{\underset{i\in J}{{\LARGE \cup }}}F_{A_{i}})^{c}$
and $F_{C}=\widetilde{\underset{i\in J}{{\LARGE \cap }}}F_{A_{i}}^{c}$. Then
for all $e\in E$,\newline
$\mu _{B}(e)=1-\mu _{\underset{i\in J}{{\LARGE \cup }}A_{i}}(e)=1-\underset{%
i\in J}{\sup }\{\mu _{A_{i}}(e)\}=\underset{i\in J}{\inf }\{1-\mu
_{A_{i}}(e)\}=\underset{i\in J}{\inf }\{\mu _{A_{i}^{c}}(e)\}=\mu _{C}(e)$

and \newline
$f_{B}(e)=X-f_{\underset{i\in J}{{\LARGE \cup }}A_{i}}(e)=X-\underset{i\in J}%
{{\LARGE \cup }}f_{A_{i}}(e)=\underset{i\in J}{{\LARGE \cap }}%
(X-f_{A_{i}}(e))=\underset{i\in J}{{\LARGE \cap }}f_{A_{i}^{c}}(e)=f_{%
\underset{i\in J}{{\LARGE \cap }}A_{i}^{c}}(e)=f_{C}(e)$.

This completes the proof. The other can be proved similarly
\end{proof}

\begin{definition}
The FP-soft set $F_{A}\in FPS(X,E)$ is called FP-soft point if $A$ is fuzzy
singleton and $f_{A}(e)\in P(X)$ for $e\in $supp$A$. If $A=\{e\}$, $\mu
_{A}(e)=\alpha \in (0,1]$, then we denote this FP-soft point by $e_{\alpha
}^{f}$.
\end{definition}

\begin{definition}
Let $e_{\alpha }^{f}$, $F_{A}\in FPS(X,E)$. We say that $e_{\alpha }^{f}%
\widetilde{\in }F_{A}$ read as $e_{\alpha }^{f}$ belongs to the FP-soft set $%
F_{A}$ if $\alpha \leq \mu _{A}(e)$ and $f(e)\subseteq f_{A}(e)$.
\end{definition}

\begin{proposition}
Every non empty FP-soft set $F_{A}$ can be expresssed as the union of all
the FP-soft points which belong to $F_{A}$.
\end{proposition}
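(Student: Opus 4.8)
The plan is to prove the set-theoretic identity $F_A = \widetilde{\bigcup} \{ e_\alpha^f : e_\alpha^f \,\widetilde{\in}\, F_A \}$ by verifying the two FP-soft inclusions $\widetilde{\subset}$ from Definition of FP-soft subset, which requires comparing membership functions (via $\le$ on fuzzy sets over $E$) and approximate functions (via $\subseteq$ pointwise). Denote the union on the right by $F_B$, so that for each $e \in E$ we have $\mu_B(e) = \sup\{\alpha : e_\alpha^f \,\widetilde{\in}\, F_A \text{ for some } f\}$ and $f_B(e) = \bigcup\{ f(e) : e_\alpha^f \,\widetilde{\in}\, F_A\}$, using Definition of arbitrary unions.

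First I would show $F_B \,\widetilde{\subset}\, F_A$: if $e_\alpha^f \,\widetilde{\in}\, F_A$ then by definition $\alpha \le \mu_A(e)$ and $f(e) \subseteq f_A(e)$; taking the supremum over all such points gives $\mu_B(e) \le \mu_A(e)$ for every $e$, hence $B \le A$, and the union $f_B(e) = \bigcup f(e) \subseteq f_A(e)$. Conversely, to show $F_A \,\widetilde{\subset}\, F_B$, fix $e \in E$ with $\mu_A(e) = \alpha > 0$ (the case $\mu_A(e) = 0$ is vacuous since then $f_A(e) = \varnothing$ and nothing is required) and consider the FP-soft point $e_\alpha^{f}$ with $f(e) = f_A(e)$ (and $f$ arbitrary, say $\varnothing$, elsewhere). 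This point satisfies $\alpha \le \mu_A(e)$ and $f(e) \subseteq f_A(e)$, so $e_\alpha^{f} \,\widetilde{\in}\, F_A$ and is one of the terms in the union; therefore $\mu_B(e) \ge \alpha = \mu_A(e)$ and $f_B(e) \supseteq f(e) = f_A(e)$. Combining the two inclusions with Definition of equality yields $F_A = F_B$.

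The only mildly delicate point is the interaction between the fuzzy-singleton support $A = \{e\}$ in the definition of an FP-soft point and the behaviour at the other parameters $e' \ne e$: there $\mu_A(e') = 0$, so such a point imposes no constraint and contributes $\varnothing$ to $f_B(e')$, which is exactly why I must exhibit, for each parameter $e$ separately, a point concentrated at $e$ with the full value $\mu_A(e)$ and the full set $f_A(e)$. I also need the standing hypothesis that $F_A$ is non-empty, i.e.\ $\mu_A(e) > 0$ for at least one $e$, only to guarantee the index set of the union is non-empty; the argument above is carried out parameter-by-parameter regardless. I expect no real obstacle beyond bookkeeping the two directions of inclusion carefully.
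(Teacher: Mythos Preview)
Your proof is correct: the two inclusions are argued cleanly, and the only subtle point --- that an FP-soft point supported at $e$ contributes nothing at other parameters, so one must exhibit for each $e$ separately a point with full height $\mu_A(e)$ and full approximate set $f_A(e)$ --- is handled explicitly.

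By contrast, the paper does not carry out this verification at all; its entire proof is the one-line remark that the result ``follows from the fact that any fuzzy set is the union of fuzzy points which belong to it'' (citing Pu--Liu). In other words, the paper treats the membership-function part as an instance of the classical fuzzy-set fact and leaves the approximate-function part implicit. Your argument is a direct, self-contained unpacking of exactly what that citation would yield, together with the (easy but necessary) check that the approximate functions also match. So the mathematical content is the same, but your route is explicit and standalone while the paper's is a bare reference; your version has the advantage of actually verifying both components of the FP-soft equality rather than relying on analogy.
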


\begin{proof}
This follows from the fact that any fuzzy set is the union of fuzzy points
which belong to it \cite{PM}.
\end{proof}

\begin{definition}
Let $F_{A}$, $F_{B}\in FPS(X,E)$. $F_{A}$ is said to be FP-soft
quasi-coincident with $F_{B}$, denoted by $F_{A}qF_{B}$, if there exists $%
e\in E$ such that $\mu _{A}(e)+\mu _{B}(e)>1$ or $f_{A}(e)$ is not subset of 
$f_{B}^{c}(e)$. If $F_{A}$ is not FP-soft quasi-coincident with $F_{B}$,
then we write $F_{A}\overline{q}F_{B}$.
\end{definition}

\begin{definition}
Let $e_{\alpha }^{f}$, $F_{A}\in FPS(X,E)$. $e_{\alpha }^{f}$ is said to be
FP-soft quasi-coincident with $F_{A}$, denoted by $e_{\alpha }^{f}qF_{A}$,
if $\alpha +\mu _{A}(e)>1$ or $f(e)$ is not subset of $f_{A}^{c}(e)$. If $%
e_{\alpha }^{f}$ is not FP-soft quasi-coincident with $F_{A}$, then we write 
$e_{\alpha }^{f}\overline{q}F_{A}$.
\end{definition}

\begin{proposition}
\label{P-c}Let $F_{A}$, $F_{B}\in FPS(X,E)$, Then the following are true.

(1) $F_{A}\widetilde{\subseteq }F_{B}\Leftrightarrow F_{A}\overline{q}%
F_{B}^{c}$.{}

(2) $F_{A}qF_{B}\Rightarrow F_{A}\widetilde{\cap }F_{B}\neq F_{\varnothing 
\text{.}}$

(3) $F_{A}\overline{q}F_{A}^{c}$.

(4) $F_{A}qF_{B}\Leftrightarrow $there exists an $e_{\alpha }^{f}\widetilde{%
\in }F_{A}$ such that $e_{\alpha }^{f}qF_{B}$.

(5) $e_{\alpha }^{f}\widetilde{\in }F_{A}^{c}\Leftrightarrow e_{\alpha }^{f}%
\overline{q}F_{A}$.

(6) $F_{A}\widetilde{\subseteq }F_{B}\Rightarrow $ If $e_{\alpha }^{f}qF_{A}$%
, then $e_{\alpha }^{f}qF_{B}$ for all $e_{\alpha }^{f}\in FPS(X,E)$.
\end{proposition}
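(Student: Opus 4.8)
The plan is to verify each of the six items by unwinding the relevant definitions (FP-soft subset, complement, quasi-coincidence, belonging of an FP-soft point) and translating everything into elementary statements about the membership functions $\mu$ on $E$ and the approximate set-valued functions $f$ on $E$. In almost every case the FP-soft statement splits into a ``fuzzy coordinate'' part (inequalities among the $\mu_A(e)$) and a ``soft coordinate'' part (inclusions among the $f_A(e)\subseteq X$), and each part is then a standard fact; I would handle the two coordinates in parallel throughout.

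First I would prove (1): by Definition of $\widetilde{\subseteq}$, $F_A\widetilde{\subseteq}F_B$ means $\mu_A(e)\le\mu_B(e)$ and $f_A(e)\subseteq f_B(e)$ for all $e$; on the other side, $F_A\,\overline{q}\,F_B^{c}$ means for every $e$ we have $\mu_A(e)+\mu_{B^{c}}(e)\le 1$ and $f_A(e)\subseteq (f_B^{c}(e))^{c}$. Since $\mu_{B^{c}}(e)=1-\mu_B(e)$, the first condition is exactly $\mu_A(e)\le\mu_B(e)$; since $(f_B^{c}(e))^{c}=X-(X-f_B(e))=f_B(e)$, the second is exactly $f_A(e)\subseteq f_B(e)$. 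So the two sides are literally the same pair of conditions, giving the equivalence. Item (3) is the special case of (1) with $F_B=F_A$ (using $(F_A^c)^c = F_A$ from the complement definition, so $F_A \widetilde\subseteq (F_A^c)^c$ trivially). For (5): $e_\alpha^{f}\widetilde{\in}F_A^{c}$ means $\alpha\le\mu_{A^{c}}(e)=1-\mu_A(e)$ and $f(e)\subseteq f_A^{c}(e)=X-f_A(e)$, i.e. $\alpha+\mu_A(e)\le 1$ and $f(e)\cap f_A(e)=\varnothing$, which is precisely $f(e)\subseteq f_A^{c}(e)$, i.e. the negation of both disjuncts in the definition of $e_\alpha^{f}qF_A$; hence $e_\alpha^{f}\,\overline{q}\,F_A$. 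For (2): if $F_AqF_B$ there is an $e$ with $\mu_A(e)+\mu_B(e)>1$ or $f_A(e)\not\subseteq f_B^{c}(e)$; in the first case $\min\{\mu_A(e),\mu_B(e)\}>0$, so $\mu_{A\cap B}(e)>0$ and $F_A\widetilde\cap F_B\ne F_\varnothing$; in the second case there is $x\in f_A(e)$ with $x\notin f_B^{c}(e)=X-f_B(e)$, i.e. $x\in f_A(e)\cap f_B(e)=f_{A\cap B}(e)$, so again the intersection is nonempty.

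Next I would do (4), the one genuinely bidirectional point with a little content. For ($\Leftarrow$): if $e_\alpha^{f}\widetilde{\in}F_A$ and $e_\alpha^{f}qF_B$, then from $e_\alpha^{f}qF_B$ either $\alpha+\mu_B(e)>1$, and since $\alpha\le\mu_A(e)$ this forces $\mu_A(e)+\mu_B(e)>1$; or $f(e)\not\subseteq f_B^{c}(e)$, and since $f(e)\subseteq f_A(e)$ this forces $f_A(e)\not\subseteq f_B^{c}(e)$; either way $F_AqF_B$. For ($\Rightarrow$): given $e$ witnessing $F_AqF_B$, I construct the required point. If $\mu_A(e)+\mu_B(e)>1$, pick $\alpha=\mu_A(e)$ (which is $>1-\mu_B(e)\ge 0$, hence in $(0,1]$) and take $f=f_A$, so $e_\alpha^{f}\widetilde{\in}F_A$ and $\alpha+\mu_B(e)>1$. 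If instead $f_A(e)\not\subseteq f_B^{c}(e)$, then in particular $f_A(e)\ne\varnothing$, so $\mu_A(e)>0$ by the defining property of FP-soft sets ($f_A(e)=\varnothing$ when $\mu_A(e)=0$); take $\alpha=\mu_A(e)\in(0,1]$ and $f=f_A$ again, giving $e_\alpha^{f}\widetilde{\in}F_A$ and $f(e)=f_A(e)\not\subseteq f_B^{c}(e)$. The mild subtlety here is just making sure the chosen $\alpha$ is a legitimate fuzzy-point value, i.e. strictly positive, which is exactly what the two cases above secure. Finally, (6) follows by chaining: $F_A\widetilde\subseteq F_B$ gives $\mu_A(e)\le\mu_B(e)$ and $f_A(e)\subseteq f_B(e)$ for all $e$, so from $e_\alpha^{f}qF_A$ (i.e. $\alpha+\mu_A(e)>1$ or $f(e)\not\subseteq f_A^{c}(e)$) we get $\alpha+\mu_B(e)\ge\alpha+\mu_A(e)>1$ in the first case, and in the second case, since $f_A(e)\subseteq f_B(e)$ implies $f_B^{c}(e)\subseteq f_A^{c}(e)$, $f(e)\not\subseteq f_A^{c}(e)$ forces $f(e)\not\subseteq f_B^{c}(e)$; hence $e_\alpha^{f}qF_B$.

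I do not expect a serious obstacle anywhere: the whole proposition is ``definition chasing,'' and the only place needing a moment's care is the forward direction of (4), where one must exhibit a concrete FP-soft point and check its scalar is positive — handled by invoking the structural axiom $\mu_A(e)=0\Rightarrow f_A(e)=\varnothing$ from Definition of FP-soft set. A stylistic point worth stating once at the outset of the proof is that I treat the $\mu$-coordinate and the $f$-coordinate separately, since a quasi-coincidence ``witness'' may come from either coordinate, and this keeps the casework transparent rather than repetitive.
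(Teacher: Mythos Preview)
Your proposal is correct and follows essentially the same definition-unwinding approach as the paper's own proof, with the same case analysis in each item. Your handling of the forward direction of (4) is actually slightly more careful than the paper's (you explicitly verify $\alpha=\mu_A(e)>0$ via the structural axiom $\mu_A(e)=0\Rightarrow f_A(e)=\varnothing$, which the paper leaves implicit), and your derivation of (3) as a special case of (1) and direct proof of (5) differ only cosmetically from the paper's direct contradiction for (3) and appeal to (1) for (5).
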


\begin{proof}
(1)

$%
\begin{array}{ll}
F_{A}\widetilde{\subseteq }F_{B} & \Leftrightarrow \text{for all }e\in E%
\text{, }\mu _{A}(e)\leq \mu _{B}(e)\text{ and }f_{A}(e)\subseteq f_{B}(e)
\\ 
& \Leftrightarrow \text{for all }e\in E\text{ , }\mu _{A}(e)-\mu _{B}(e)\leq
0\text{ and }f_{A}(e)\subseteq f_{B}(e) \\ 
& \Leftrightarrow \text{for all }e\in E\text{, }\mu _{A}(e)+1-\mu
_{B}(e)\leq 1\text{ and }f_{A}(e)\subseteq f_{B}(e) \\ 
& \Leftrightarrow F_{A}\overline{q}F_{B}^{c}%
\end{array}%
$

(2) Let $F_{A}qF_{B}$. Then there exists an $e\in E$ such that $\mu
_{A}(e)+\mu _{B}(e)>1$ or $f_{A}(e)$ is not subset of $f_{B}^{c}(e)$. If $%
\mu _{A}(e)+\mu _{B}(e)>1$, $A\wedge B\neq 0_{E}$ and the proof is easy. If $%
f_{A}(e)$ is not subset of $f_{B}^{c}(e)$, then $f_{A}(e)\cap f_{B}(e)\neq
\varnothing $. Hence $F_{A}\widetilde{\cap }F_{B}\neq F_{\varnothing }$.

(3) Suppose that $F_{A}qF_{A}^{c}$. Then there exists $e\in E$ such that $%
\mu _{A}(e)+\mu _{A^{c}}(e)>1$ or $f_{A}(e)$ is not subset $%
(f_{A}^{c}(e))^{c}$. But this is impossible.

(4) If $F_{A}qF_{B}$, then there exist an $e\in E$\ such that $\mu
_{A}(e)+\mu _{B}(e)>1$\ or $f_{A}(e)$\ is not subset of $f_{B}^{c}(e)$. Put $%
\alpha =\mu _{A}(e)$ and $f(e)=f_{A}(e)$. Then we have $e_{\alpha }^{f}%
\widetilde{\in }F_{A}$\ and $e_{\alpha }^{f}qF_{B}$.

Conversely, suppose that $e_{\alpha }^{f}qF_{B}$ for some $e_{\alpha }^{f}%
\widetilde{\in }F_{A}$. Then $\alpha +\mu _{B}(e)>1$ or $f(e)$ is not subset
of $f_{B}^{c}(e)$. Therefore, we have $\mu _{A}(e)+\mu _{B}(e)>1$ or $%
f_{A}(e)$ is not subset of $f_{B}^{c}(e)$ for $e\in E$ . This shows that $%
F_{A}qF_{B}$.

(5) It is obvious from (1).

(6) Let $e_{\alpha }^{f}$, $F_{A}\in FPS(X,E)$ and $e_{\alpha }^{f}qF_{A}$.
Then $\alpha +\mu _{A}(e)>1$ or $f(e)$ is not subset of $f_{A}^{c}(e)$.
Since $F_{A}\widetilde{\subseteq }F_{B}$, $\alpha +\mu _{B}(e)>1$ or $f(e)$
is not subset of $f_{B}^{c}(e)$. Hence we have $e_{\alpha }^{f}qF_{B}$.
\end{proof}

\begin{proposition}
Let $\{F_{A_{i}}:i\in J\}$ be a family of FP-soft sets in $FPS(X,E)$ where $%
J $ is an index set. Then $e_{\alpha }^{f}$ is FP-soft quasi-coincident with 
$\widetilde{{\LARGE \cup }}_{i\in J}F_{A_{i}}$ if and only if there exists
some $F_{A_{i}}\in \{F_{A_{i}}:i\in J\}$ such that $e_{\alpha
}^{f}qF_{A_{i}} $.
\end{proposition}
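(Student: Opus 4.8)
The plan is to unwind both sides of the equivalence directly from the definition of the FP-soft union of an arbitrary family and the definition of quasi-coincidence of an FP-soft point, handling the two ``or'' alternatives of that definition separately. Write $e_{\alpha }^{f}$ for the FP-soft point with support $\{e\}$ and value $\alpha$, and put $F_{B}=\widetilde{{\LARGE \cup }}_{i\in J}F_{A_{i}}$, so that $\mu _{B}(e)=\sup_{i\in J}\mu _{A_{i}}(e)$ and $f_{B}(e)=\bigcup_{i\in J}f_{A_{i}}(e)$, whence $f_{B}^{c}(e)=X-\bigcup_{i\in J}f_{A_{i}}(e)=\bigcap_{i\in J}f_{A_{i}}^{c}(e)$ by the pointwise De Morgan law for ordinary subsets of $X$. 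By definition, $e_{\alpha }^{f}qF_{B}$ means: $\alpha +\sup_{i\in J}\mu _{A_{i}}(e)>1$, or $f(e)\not\subseteq \bigcap_{i\in J}f_{A_{i}}^{c}(e)$.

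For ``$\Leftarrow$'': assume $e_{\alpha }^{f}qF_{A_{i_{0}}}$ for some $i_{0}\in J$. If $\alpha +\mu _{A_{i_{0}}}(e)>1$, then since $\sup_{i\in J}\mu _{A_{i}}(e)\geq \mu _{A_{i_{0}}}(e)$ we get $\alpha +\sup_{i\in J}\mu _{A_{i}}(e)>1$, so $e_{\alpha }^{f}qF_{B}$. If instead $f(e)\not\subseteq f_{A_{i_{0}}}^{c}(e)$, choose $x\in f(e)$ with $x\notin f_{A_{i_{0}}}^{c}(e)$; then $x\in f_{A_{i_{0}}}(e)\subseteq \bigcup_{i\in J}f_{A_{i}}(e)$, so $x\notin \bigcap_{i\in J}f_{A_{i}}^{c}(e)$, which gives $f(e)\not\subseteq \bigcap_{i\in J}f_{A_{i}}^{c}(e)$ and again $e_{\alpha }^{f}qF_{B}$.

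For ``$\Rightarrow$'': assume $e_{\alpha }^{f}qF_{B}$. The one nontrivial ingredient is the elementary fact that $\sup_{i\in J}c_{i}>t$ holds iff $c_{i_{0}}>t$ for some $i_{0}\in J$; applying this with $t=1-\alpha$, if $\alpha +\sup_{i\in J}\mu _{A_{i}}(e)>1$ then there is $i_{0}\in J$ with $\mu _{A_{i_{0}}}(e)>1-\alpha$, i.e.\ $\alpha +\mu _{A_{i_{0}}}(e)>1$, so $e_{\alpha }^{f}qF_{A_{i_{0}}}$. In the remaining case there is $x\in f(e)$ with $x\notin \bigcap_{i\in J}f_{A_{i}}^{c}(e)$, i.e.\ $x\in \bigcup_{i\in J}f_{A_{i}}(e)$, so $x\in f_{A_{i_{0}}}(e)$ for some $i_{0}\in J$; then $x\in f(e)$ and $x\notin f_{A_{i_{0}}}^{c}(e)$, hence $f(e)\not\subseteq f_{A_{i_{0}}}^{c}(e)$ and $e_{\alpha }^{f}qF_{A_{i_{0}}}$. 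This establishes the equivalence.

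I do not expect a genuine obstacle: the proof is a routine translation between the union data and the defining inequalities. The only point needing a little care is that the supremum characterization used in ``$\Rightarrow$'' really requires the \emph{strict} inequality $>$ (it is false for $\geq$), which is precisely how the definition of FP-soft quasi-coincidence is phrased. Alternatively one can avoid the explicit case split by dualizing: using Proposition \ref{P-c}(5) together with the De Morgan law $(\widetilde{{\LARGE \cup }}_{i\in J}F_{A_{i}})^{c}=\widetilde{{\LARGE \cap }}_{i\in J}F_{A_{i}}^{c}$, one has $e_{\alpha }^{f}\,\overline{q}\,F_{B}\Leftrightarrow e_{\alpha }^{f}\widetilde{\in }\widetilde{{\LARGE \cap }}_{i\in J}F_{A_{i}}^{c}\Leftrightarrow e_{\alpha }^{f}\widetilde{\in }F_{A_{i}}^{c}\text{ for all }i\in J\Leftrightarrow e_{\alpha }^{f}\,\overline{q}\,F_{A_{i}}\text{ for all }i\in J$, and the statement is the contrapositive; here the analogous delicate point is that $\alpha \leq \inf_{i\in J}\mu _{A_{i}^{c}}(e)$ iff $\alpha \leq \mu _{A_{i}^{c}}(e)$ for every $i$, which, being the statement that $\alpha$ is a lower bound, is valid with the non-strict inequality.
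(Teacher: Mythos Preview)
Your proof is correct; the paper itself dispenses with this proposition by writing only ``Obvious'', so your argument simply supplies the routine details the authors omit. Your remark that the \emph{strict} inequality in the definition of quasi-coincidence is what makes the supremum step in the ``$\Rightarrow$'' direction go through is accurate and worth noting, and the alternative dual argument via Proposition~\ref{P-c}(5) and De~Morgan is also valid.
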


\begin{proof}
Obvious.
\end{proof}

\begin{definition}
Let $FPS(X,E)$ and $FPS(Y,K)$ be families of all FP-soft sets over $X$ and $%
Y $, respectively. Let $u:X\rightarrow Y$ and $p:E\rightarrow K$ be two
functions. Then a FP-soft mapping $f_{up}:FPS(X,E)\rightarrow FPS(Y,K)$ is
defined as:

(1) for $F_{A}\in FPS(X,E)$, the image of $F_{A}$ under the FP-soft mapping $%
f_{up}$ is the FP-soft set $G_{S}$ over $Y$ defined by the approximate
function, $\forall k\in K$,

$g_{S}(k)=\left\{ 
\begin{array}{cc}
\underset{e\in p^{-1}(k)}{{\Large \cup }}u(f_{A}(e))\text{,} & \text{if }%
p^{-1}(k)\neq \varnothing \text{;} \\ 
\varnothing \text{,} & \text{otherwise.}%
\end{array}%
\right. $

where $p(A)=S$ is fuzzy set in $K$.

(2) for $G_{S}\in FPS(Y,K)$, then the pre-image of $G_{S}$ under the FP-soft
mapping $f_{up}$ is the FP-soft set $F_{A}$ over $X$ defined by the
approximate function, $\forall e\in E$

$f_{A}(e)=u^{-1}(g_{S}(p(e)))$ where $p^{-1}(S)=A$ is fuzzy set in $E$.
\end{definition}

If $u$ and $p$ is injective, then the FP-soft mapping $f_{up}$ is said to be
injective. If $u$ and $p$ is surjective, then the FP-soft mapping $f_{up}$
is said to be surjective. The FP-soft mapping $f_{up}$ is called constant,
if $u$ and $p$ are constant.

\begin{theorem}
\label{fo}Let $X$ and $Y$ crips sets $F_{A}$, $F_{A_{i}}\in FPS(X,E)$, $%
G_{S} $, $G_{S_{i}}\in FPS(Y,K)$ $\forall i\in J$, where $J$ is an index
set. Let $f_{up}:FPS(X,E)\rightarrow FPS(Y,K)$ be a FP-soft mapping. Then,

(1) If $F_{A_{1}}\widetilde{\subset }F_{A_{2}}$ then $f_{up}(F_{A_{1}})%
\widetilde{\subset }f_{up}(F_{A_{2}})$.

(2) If $G_{S_{1}}\widetilde{\subset }G_{S_{2}}$ then $f_{up}^{-1}(G_{S_{1}})%
\widetilde{\subset }f_{up}^{-1}(G_{S_{2}})$.

(3) $F_{A}\widetilde{\subset }f_{up}^{-1}(f_{up}(F_{A}))$, the equality
holds if $f_{up}$ is injective.

(4) $f_{up}(f_{up}^{-1}(G_{S}))\widetilde{\subset }G_{S}$, the equality
holds if $f_{up}$ is surjective.

(5) $f_{up}(\widetilde{{\Large \cup }}_{i\in J}F_{A_{i}})=\widetilde{{\Large %
\cup }}_{i\in J}f_{up}(F_{A_{i}})$.

(6) $f_{up}(\widetilde{{\Large \cap }}_{i\in J}F_{A_{i}})\widetilde{\subset }%
\widetilde{{\Large \cap }}_{i\in J}f_{up}(F_{A_{i}})$, the equality holds if 
$f_{up}$ is injective.

(7) $f_{up}^{-1}(\widetilde{{\Large \cup }}_{i\in J}G_{S_{i}})=\widetilde{%
{\Large \cup }}_{i\in J}f_{up}^{-1}(G_{S_{i}})$.

(8) $f_{up}^{-1}(\widetilde{{\Large \cap }}_{i\in J}G_{S_{i}})=\widetilde{%
{\Large \cap }}_{i\in J}f_{up}^{-1}(G_{S_{i}})$.

(9) $(f_{up}^{-1}(G_{S}))^{c}=f_{up}^{-1}(G_{S}^{c})$.

(10) $(f_{up}(F_{A}))^{c}\widetilde{\subset }f_{up}(F_{A}^{c})$.

(11) $f_{up}^{-1}(G_{\widetilde{K}})=F_{\widetilde{E}}$.

(12) $f_{up}^{-1}(G_{\varnothing })=F_{\varnothing }$.

(13) $f_{up}(F_{\widetilde{E}})\widetilde{\subset }G_{\widetilde{K}}$, the
equality holds if $f_{up}$ is surjective.

(14) $f_{up}(F_{\varnothing })=G_{\varnothing }$.
\end{theorem}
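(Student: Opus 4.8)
The plan is to verify each of the fourteen identities by unwinding the definitions of the FP-soft image $f_{up}$ and pre-image $f_{up}^{-1}$ from the preceding definition, working separately with the membership (fuzzy) component and the approximate (set-valued) component. For the membership parts, the image of $F_A$ has membership function $p(\mu_A)$ (the forward image of a fuzzy set under $p$, given by $\mu_{p(A)}(k)=\sup\{\mu_A(e):e\in p^{-1}(k)\}$ when $p^{-1}(k)\neq\varnothing$ and $0$ otherwise), and the pre-image of $G_S$ has membership function $p^{-1}(\mu_S)$, i.e. $\mu_{p^{-1}(S)}(e)=\mu_S(p(e))$. For the approximate parts, the formulas are literally $g_S(k)=\bigcup_{e\in p^{-1}(k)}u(f_A(e))$ and $f_A(e)=u^{-1}(g_S(p(e)))$. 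So each of (1)--(14) splits into a statement about fuzzy sets under a function and a parallel statement about ordinary subsets under the functions $u,p$; both halves are the classical facts about images and preimages, and I would cite or reprove them componentwise.

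Concretely, I would proceed in the listed order: (1)--(2) monotonicity, which is immediate since both $\sup$ over a larger family of values and $\bigcup$ of larger sets preserve $\leq$ and $\subseteq$; (3)--(4) the unit/counit inequalities $F_A\widetilde{\subset}f_{up}^{-1}(f_{up}(F_A))$ and $f_{up}(f_{up}^{-1}(G_S))\widetilde{\subset}G_S$, with equality under injectivity and surjectivity respectively, reducing to the standard facts $\mu_A(e)\leq \sup\{\mu_A(e'):p(e')=p(e)\}$ and $f_A(e)\subseteq u^{-1}(u(f_A(e)))$ together with their reverse inclusions when $u,p$ are injective/surjective; (5),(7),(8) the exact distribution over unions and intersections, using that forward images commute with arbitrary unions and preimages commute with arbitrary unions and intersections — here I must be careful that $\sup$ over $p^{-1}(k)$ of a pointwise supremum is the supremum of the suprema (true by associativity of $\sup$) and that $u^{-1}$ of an intersection is the intersection of $u^{-1}$'s; (6) the one-sided inclusion for images of intersections, again equality under injectivity of $u$ (and $p$); (9)--(10) behaviour under complement, where (9) is an equality because $\mu_{S^c}(p(e))=1-\mu_S(p(e))$ and $u^{-1}(Y\setminus B)=X\setminus u^{-1}(B)$, while (10) is only an inclusion since forward images do not commute with complementation; (11)--(14) the boundary cases, where (11) and (12) are equalities by direct substitution ($\mu_{\widetilde K}\equiv 1$ pulls back to $\equiv 1$, $g(k)=Y$ pulls back to $u^{-1}(Y)=X$, and the empty FP-soft set pulls back to the empty one), (14) holds because an empty union is empty, and (13) is an inclusion with equality under surjectivity of both $u$ and $p$ (surjectivity of $p$ guarantees every $k$ has $p^{-1}(k)\neq\varnothing$ and surjectivity of $u$ gives $u(X)=Y$).

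For the parts that need the fuzzy membership functions handled carefully --- especially (5), (7), (8) --- I would invoke \textbf{Proposition \ref{fo}}'s set-theoretic analogues together with the already-established behaviour of fuzzy unions and intersections from Definition 2.1 and Proposition 2.2, so that the $\sup$/$\inf$ manipulations are not redone from scratch. The main obstacle is not any single hard step but rather the bookkeeping: one must consistently track the two components, be vigilant about the case distinction $p^{-1}(k)=\varnothing$ versus $p^{-1}(k)\neq\varnothing$ in every image computation, and correctly identify exactly where injectivity of $u$ (and of $p$) or surjectivity of $u$ (and of $p$) is used --- for instance in (3) equality needs $f_{up}$ injective, meaning \emph{both} $u$ and $p$ injective, and similarly the membership equality in (6) requires $p$ injective while the approximate equality requires $u$ injective. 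I expect (5) and (6) to be the most delicate: in (5) one checks $\bigcup_{e\in p^{-1}(k)}u\big(\bigcup_i f_{A_i}(e)\big)=\bigcup_i\bigcup_{e\in p^{-1}(k)}u(f_{A_i}(e))$, which is a legitimate interchange of unions, and in (6) one must produce the explicit counterexample-flavoured argument showing why $u$ non-injective breaks equality, or at least note that the reverse inclusion fails in general, so the statement is genuinely one-sided without injectivity.
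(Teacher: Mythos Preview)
Your proposal is correct and follows essentially the same route as the paper: verify each item by unwinding the image and pre-image definitions componentwise on the membership (fuzzy) part and the approximate (set-valued) part, reducing everything to the standard identities for images and preimages under $p$ and $u$; the paper in fact writes out only (3), (5), (7), (9), (11), (12) and dismisses the rest as similar, so your more systematic outline is if anything more complete. One small fix: where you write ``invoke Proposition~\ref{fo}'s set-theoretic analogues'' you are citing the very theorem you are proving---you presumably mean the arbitrary-index De~Morgan proposition immediately preceding it.
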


\begin{proof}
We only prove (3),(5),(7),(9),(11) and (12). The others can be proved
similarly.

(3) Put $G_{S}=f_{up}(F_{A})$ and $F_{B}=$ $f_{up}^{-1}(G_{S})$. Since $%
A\leq p^{-1}(p(A))=p^{-1}(S)=B$, It is sufficient to show $f_{A}(e)\subseteq
f_{B}(e)$ for all $e\in E$,

$%
\begin{array}{ll}
f_{B}(e) & =u^{-1}(g_{S}(p(e))) \\ 
& =u^{-1}({\Large \cup }_{e\in p^{-1}(p(e))}u(f_{A}(e))) \\ 
& ={\Large \cup }_{e\in p^{-1}(p(e))}u^{-1}(u(f_{A}(e))) \\ 
& \supseteq f_{A}(e)%
\end{array}%
$

This completes the proof.

(5) Put $G_{S_{i}}=f_{up}(F_{A_{i}})$ and $G_{S}=f_{up}(\widetilde{{\Large %
\cup }}_{i\in J}(F_{A_{i}}))$. Then $S=p({\Large \vee }A_{i})={\Large \vee }%
p(A_{i})={\Large \vee }S_{i}$ and for all $k\in K$,

$%
\begin{array}{ll}
g_{S}(k) & =\left\{ 
\begin{array}{cc}
{\Large \cup }_{e\in p^{-1}(k)}u(\underset{i\in J}{\cup }f_{A_{i}}(e)) & 
\text{if }p^{-1}(k)\neq \varnothing \text{;} \\ 
\varnothing & \text{otherwise.}%
\end{array}%
\right. \\ 
& =\left\{ 
\begin{array}{cc}
{\Large \cup }_{e\in p^{-1}(k)}\underset{i\in J}{\cup }u(f_{A_{i}}(e)) & 
\text{if }p^{-1}(k)\neq \varnothing \text{;} \\ 
\varnothing & \text{otherwise.}%
\end{array}%
\right. \\ 
& =\left\{ 
\begin{array}{cc}
\underset{i\in J}{{\LARGE \cup }}{\Large \cup }_{e\in
p^{-1}(k)}u(f_{A_{i}}(e)) & \text{if }p^{-1}(k)\neq \varnothing \text{;} \\ 
\varnothing & \text{otherwise.}%
\end{array}%
\right. \\ 
& =\underset{i\in J}{{\LARGE \cup }}\left\{ 
\begin{array}{cc}
{\Large \cup }_{e\in p^{-1}(k)}u(f_{A_{i}}(e)) & \text{if }p^{-1}(k)\neq
\varnothing \text{;} \\ 
\varnothing & \text{otherwise.}%
\end{array}%
\right. \\ 
& ={\Large \cup }_{i\in J}g_{S_{i}}(k)%
\end{array}%
$

This completes the proof.

(7) Put $F_{A_{i}}=f_{up}^{-1}(G_{S_{i}})$ and $F_{A}=f_{up}^{-1}(\widetilde{%
{\Large \cup }}_{i\in J}G_{S_{i}})$. Then $A=p^{-1}({\Large \vee }S_{i})=%
{\Large \vee }p^{-1}(S_{i})={\Large \vee }A_{i}$ and for all $e\in E$,

$%
\begin{array}{ll}
f_{A}(e) & =u^{-1}(\underset{i\in I}{{\LARGE \cup }}g_{S_{i}}(p(e))) \\ 
& =\underset{i\in I}{{\LARGE \cup }}u^{-1}(g_{S_{i}}(p(e))) \\ 
& ={\Large \cup }_{i\in J}f_{A_{i}}(e)%
\end{array}%
$

This completes the proof.

(9) Put $f_{up}^{-1}(G_{S})=F_{A}$ and $f_{up}^{-1}(G_{S}^{c})=F_{B}$. Then
for all $e\in E$,\newline
$f_{B}(e)=f_{p^{-1}(S^{c})}(e)=f_{(p^{-1}(S))^{c}}(e)=f_{A^{c}}(e)$ where $%
p^{-1}(S)$ and $p^{-1}(S^{c})$ are fuzzy sets over $E$. This shows that the
approximate functions of $F_{B}$ and $F_{A}^{c}$ are equal. This completes
the proof.

(11) Put $F_{A}=f_{up}^{-1}(G_{\widetilde{K}})$. Then for all $e\in E$,%
\newline
$f_{A}(e)=u^{-1}(G_{\widetilde{K}}(p(e)))=u^{-1}(Y)=X=f_{E}(e)$. This shows
that $F_{A}=F_{\widetilde{E}}$.

(12) Since $p^{-1}(K)$ is fuzzy empty set i.e. $0_{E}$, the proof is clear.
\end{proof}

\section{FP-soft topological spaces}

\begin{definition}
A FP-soft topological space is a pair $(X,{\Large \tau })$ where $X$ is a
nonempty set and ${\Large \tau }$ is a family of FP-soft sets over $X$
satisfying the following properties:

(T1) $F_{\varnothing },F_{\widetilde{E}}\in {\Large \tau }$

(T2) If $F_{A}$, $F_{B}\in {\Large \tau }$ , then $F_{A}\widetilde{\cap }%
F_{B}\in {\Large \tau }$

(T3) If $F_{A_{i}}\in {\Large \tau }$ ,$\forall i\in J$, then $\widetilde{%
{\LARGE \cup }}_{i\in J}F_{A_{i}}\in {\Large \tau }$.\newline
${\Large \tau }$ is called a topology of FP-soft sets on $X$. Every member
of ${\Large \tau }$ is called FP-soft open in $(X,{\Large \tau })$. $F_{B}$
is called FP-soft closed in $(X,{\Large \tau })$ if $F_{B}^{c}\in {\Large %
\tau }$.
\end{definition}

\begin{example}
${\Large \tau }_{indiscrete}=\{F_{\varnothing },F_{\widetilde{E}}\}$ is a
FP-soft topology on $X$.

${\Large \tau }_{discrete}=FPS(X,E)$ is a FP-soft topology on $X$.
\end{example}

\begin{example}
\label{to}Assume that $X=\{x_{1},x_{2},x_{3},x_{4}\}$ is a universal set and 
$E=\{e_{1},e_{2},e_{3}\}$ is a set of parameters. If

$F_{A_{1}}=\{((e_{1})_{0,2},\{x_{1},x_{3}\}),((e_{2})_{0,3},\{x_{1},x_{4}%
\}),((e_{3})_{0,4},\{x_{2}\})\}$

$F_{A_{2}}=\{((e_{1})_{0,2},\{x_{1},x_{2},x_{3}\}),((e_{2})_{0,5},%
\{x_{1},x_{4}\}),((e_{3})_{0,4},\{x_{1},x_{2}\})\}$

$F_{A_{3}}=\{((e_{1})_{0,7},\{x_{1},x_{3}%
\}),((e_{2})_{0,3},X),((e_{3})_{0,9},\{x_{2},x_{3}\})\}$

$F_{A_{4}}=\{((e_{1})_{0,7},\{x_{1},x_{2},x_{3}%
\}),((e_{2})_{0,5},X),((e_{3})_{0,9},\{x_{2},x_{3}\})\}$\newline
then ${\Large \tau }=\{F_{\varnothing
},F_{A_{1}},F_{A_{2}},F_{A_{3}},F_{A_{4}},F_{\widetilde{E}}\}$ is a FP-soft
topology on $X$.
\end{example}

\begin{theorem}
Let $(X,{\Large \tau })$ a FP-soft topological space and ${\Large \tau }%
^{\prime }$ denote family of all closed sets. Then;

(1) $F_{\varnothing },F_{\widetilde{E}}\in {\Large \tau }^{\prime }$

(2) If $F_{A}$, $F_{B}\in {\Large \tau }^{\prime }$, then $F_{A}\widetilde{%
\cup }F_{B}\in {\Large \tau }^{\prime }$

(3) If $F_{A_{i}}\in {\Large \tau }^{\prime }$, $\forall i\in J$, then $%
\widetilde{{\LARGE \cap }}_{i\in J}F_{A_{i}}\in {\Large \tau }^{\prime }$.
\end{theorem}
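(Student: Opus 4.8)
The plan is to deduce each of the three properties from the corresponding axiom (T1)–(T3) for $\tau$ by passing to complements, using the De Morgan laws already established in Proposition 1 (finite case) and Proposition 3 (arbitrary case), together with the involutive property $(F_A^c)^c = F_A$ and the identities $F_{\widetilde{E}}^c = F_\varnothing$, $F_\varnothing^c = F_{\widetilde{E}}$ recorded in Definition 6. The key observation throughout is that, by definition, $F_B \in \tau'$ exactly when $F_B^c \in \tau$.

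For (1): since $F_\varnothing, F_{\widetilde{E}} \in \tau$ by (T1), and $F_\varnothing^c = F_{\widetilde{E}} \in \tau$, $F_{\widetilde{E}}^c = F_\varnothing \in \tau$, we get $F_\varnothing, F_{\widetilde{E}} \in \tau'$ directly. For (2): let $F_A, F_B \in \tau'$, so $F_A^c, F_B^c \in \tau$; by (T2) $F_A^c \widetilde{\cap} F_B^c \in \tau$; but by Proposition 1(1), $F_A^c \widetilde{\cap} F_B^c = (F_A \widetilde{\cup} F_B)^c$, hence $(F_A \widetilde{\cup} F_B)^c \in \tau$, which says precisely $F_A \widetilde{\cup} F_B \in \tau'$. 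For (3): let $F_{A_i} \in \tau'$ for all $i \in J$, so $F_{A_i}^c \in \tau$; by (T3) $\widetilde{\bigcup}_{i \in J} F_{A_i}^c \in \tau$; by Proposition 3(2), $\widetilde{\bigcup}_{i \in J} F_{A_i}^c = (\widetilde{\bigcap}_{i \in J} F_{A_i})^c$, so $(\widetilde{\bigcap}_{i \in J} F_{A_i})^c \in \tau$, i.e. $\widetilde{\bigcap}_{i \in J} F_{A_i} \in \tau'$.

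There is no real obstacle here — this is the standard ``closed sets are complements of open sets'' argument, and every ingredient (the two De Morgan propositions, the behaviour of complement on $F_\varnothing$ and $F_{\widetilde{E}}$, and the involution) is already available in the excerpt. The only point requiring the slightest care is making sure the De Morgan law is applied in the correct direction: in (2) and (3) one needs $(F_A \widetilde{\cup} F_B)^c = F_A^c \widetilde{\cap} F_B^c$ and $(\widetilde{\bigcap}_i F_{A_i})^c = \widetilde{\bigcup}_i F_{A_i}^c$, both of which are exactly the stated forms (Proposition 1(1) and Proposition 3(2) respectively). I would write the proof as three short displayed chains of implications, one per item, with no computation beyond invoking these named results.
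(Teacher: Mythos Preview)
Your argument is correct and is precisely the standard complement-and-De-Morgan proof that the paper's one-word ``Straightforward'' is gesturing at. One minor bookkeeping point: in the paper's numbering the arbitrary-index De Morgan laws are Proposition~2, not Proposition~3 (Proposition~3 is the statement that every nonempty FP-soft set is the union of its FP-soft points), so adjust your citations accordingly.
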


\begin{proof}
Straightforward.
\end{proof}

\begin{definition}
Let $(X,{\Large \tau })$ be a FP-soft topological space and $F_{A}\in
FPS(X,E)$. The FP-soft closure of $F_{A}$ in $(X,{\Large \tau })$, denoted
by $\overline{F_{A}}$, is the intersection of all FP-soft closed supersets
of $F_{A}.$

Clearly, $\overline{F_{A}}$ is the smallest FP-soft closed set over $X$
which contains $F_{A}$, and $\overline{F_{A}}$ is closed.
\end{definition}

\begin{theorem}
\label{kap-oz}Let $(X,{\Large \tau })$ be a FP-soft topological space and $%
F_{A}$, $F_{B}\in FPS(X,E)$. Then,

(1) $\overline{F_{\varnothing }}=F_{\varnothing }$ and $\overline{F_{%
\widetilde{E}}}=F_{\widetilde{E}}$.

(2) $F_{A}\widetilde{\subset }\overline{F_{A}}$.

(3) $\overline{\overline{F_{A}}}=\overline{F_{A}}$.

(4) If $F_{A}\widetilde{\subset }F_{B}$, then $\overline{F_{A}}\widetilde{%
\subset }\overline{F_{B}}$.

(5) $F_{A}$ is a FP-soft closed set if and only if $F_{A}=\overline{F_{A}}$.

(6) $\overline{F_{A}\widetilde{\cup }F_{B}}=\overline{F_{A}}\widetilde{\cup }%
\overline{F_{B}}$.
\end{theorem}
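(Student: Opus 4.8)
The plan is to establish the six closure properties in the order listed, since each builds naturally on the definitions of the previous section and the basic properties of the closure operator.

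For (1), I would note that $F_{\varnothing}$ and $F_{\widetilde{E}}$ are themselves FP-soft closed (both belong to ${\Large \tau}^{\prime}$ by the previous theorem, since $F_{\varnothing}^{c}=F_{\widetilde{E}}\in{\Large \tau}$ and $F_{\widetilde{E}}^{c}=F_{\varnothing}\in{\Large \tau}$), so each equals its own smallest closed superset. For (2), this is immediate from the definition: $\overline{F_{A}}$ is an intersection of sets each containing $F_{A}$, so it contains $F_{A}$; I would invoke Definition of closure directly. For (3), since $\overline{F_{A}}$ is FP-soft closed (remarked right after the definition), it is its own smallest closed superset, hence $\overline{\overline{F_{A}}}=\overline{F_{A}}$. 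For (4), if $F_{A}\widetilde{\subset}F_{B}$ then every closed superset of $F_{B}$ is a closed superset of $F_{A}$, so the intersection defining $\overline{F_{A}}$ is over a larger family and thus yields a smaller (or equal) set contained in $\overline{F_{B}}$; here I would use monotonicity of $\widetilde{\cap}$ with respect to $\widetilde{\subset}$. For (5), the forward direction is trivial from (2) together with the fact that a closed $F_{A}$ is among its own closed supersets; the reverse direction follows because $\overline{F_{A}}$ is always closed, so if $F_{A}=\overline{F_{A}}$ then $F_{A}$ is closed.

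The substantive part is (6), the only nontrivial equality. The inclusion $\overline{F_{A}}\widetilde{\cup}\overline{F_{B}}\widetilde{\subset}\overline{F_{A}\widetilde{\cup}F_{B}}$ follows from (4): since $F_{A}\widetilde{\subset}F_{A}\widetilde{\cup}F_{B}$ and $F_{B}\widetilde{\subset}F_{A}\widetilde{\cup}F_{B}$, we get $\overline{F_{A}}\widetilde{\subset}\overline{F_{A}\widetilde{\cup}F_{B}}$ and $\overline{F_{B}}\widetilde{\subset}\overline{F_{A}\widetilde{\cup}F_{B}}$, and taking the union of the left sides preserves the inclusion. For the reverse inclusion I would argue that $\overline{F_{A}}\widetilde{\cup}\overline{F_{B}}$ is itself FP-soft closed—this uses part (2) of the previous theorem, that a finite union of FP-soft closed sets is FP-soft closed, together with the fact that $\overline{F_{A}}$ and $\overline{F_{B}}$ are closed. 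Moreover, by (2), $F_{A}\widetilde{\subset}\overline{F_{A}}$ and $F_{B}\widetilde{\subset}\overline{F_{B}}$, so $F_{A}\widetilde{\cup}F_{B}\widetilde{\subset}\overline{F_{A}}\widetilde{\cup}\overline{F_{B}}$. Thus $\overline{F_{A}}\widetilde{\cup}\overline{F_{B}}$ is a closed superset of $F_{A}\widetilde{\cup}F_{B}$, and since $\overline{F_{A}\widetilde{\cup}F_{B}}$ is the smallest such, we obtain $\overline{F_{A}\widetilde{\cup}F_{B}}\widetilde{\subset}\overline{F_{A}}\widetilde{\cup}\overline{F_{B}}$. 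Combining the two inclusions via antisymmetry of $\widetilde{\subset}$ (Definition of equality for FP-soft sets) gives the claim.

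The main obstacle, such as it is, is purely bookkeeping: one must be careful that the FP-soft subset relation $\widetilde{\subset}$ behaves well under $\widetilde{\cup}$ and $\widetilde{\cap}$ at the level of both membership functions (using $\leq$, $\max$, $\min$ on $[0,1]$) and approximate functions (using $\subseteq$, $\cup$, $\cap$ on $P(X)$), but all of this is routine from Definition of the operations and Proposition on their algebraic properties. No genuinely new idea is needed beyond the standard topological-closure argument transported to the FP-soft setting; the finiteness in (6) is essential and I would not claim an arbitrary-union version.
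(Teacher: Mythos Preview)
Your proposal is correct and follows essentially the same approach as the paper: parts (1)--(4) are dismissed there as ``obvious from the definition of FP-soft closure'' (you simply spell out a bit more detail), part (5) is argued via minimality of $\overline{F_A}$, and part (6) is proved exactly as you do---one inclusion from monotonicity (4), the other from the fact that $\overline{F_A}\widetilde{\cup}\overline{F_B}$ is a closed superset of $F_A\widetilde{\cup}F_B$.
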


\begin{proof}
(1),(2),(3) and (4) are obvious from the definition of FP-soft closure.

(5) Let $F_{A}$ be a FP-soft closed set. Since\ $\overline{F_{A}}$ is the
smallest FP-soft closed set which contains $F_{A}$, then $\overline{F_{A}}%
\widetilde{\subset }F_{A}$. Therefore, $F_{A}=\overline{F_{A}}$.

(6) Since $F_{A}\widetilde{\subset }F_{A}\widetilde{\cup }F_{B}$ and $F_{B}%
\widetilde{\subset }F_{A}\widetilde{\cup }F_{B}$, then, by (4), $\overline{%
F_{A}}\widetilde{\subset }\overline{F_{A}\widetilde{\cup }F_{B}}$ , $%
\overline{F_{B}}\widetilde{\subset }\overline{F_{A}\widetilde{\cup }F_{B}}$
and hence $\overline{F_{A}}\widetilde{\cup }\overline{F_{B}}\widetilde{%
\subset }\overline{F_{A}\widetilde{\cup }F_{B}}$.

Conversely, since $\overline{F_{A}}$, $\overline{F_{B}}$ are FP-soft closed
sets, $\overline{F_{A}}\widetilde{\cup }\overline{F_{B}}$ is a FP-soft
closed set. Again since $F_{A}\widetilde{\cup }F_{B}\widetilde{\subset }%
\overline{F_{A}}\widetilde{\cup }\overline{F_{B}}$, by (4), then $\overline{%
F_{A}\widetilde{\cup }F_{B}}\widetilde{\subset }\overline{F_{A}}\widetilde{%
\cup }\overline{F_{B}}$.
\end{proof}

\begin{definition}
Let $(X,{\Large \tau })$ be a FP-soft topological space. A FP-soft set $%
F_{A} $ in $FPS(X,E)$ is called FP-Q-neighborhood (briefly, FP-Q-nbd) of a
FP-soft set $F_{B}$ if there exists a FP-soft open set $F_{C}$ in ${\Large %
\tau }$ such that $F_{B}qF_{C}$ and $F_{C}\widetilde{\subseteq }F_{A}$.
\end{definition}

\begin{theorem}
Let $e_{\alpha }^{f}$, $F_{A}\in FPS(X,E)$. Then $e_{\alpha }^{f}\widetilde{%
\in }\overline{F_{A}}$ if and only if each FP-Q-nbd of $e_{\alpha }^{f}$ is
FP-soft quasi-coincident with $F_{A}$.
\end{theorem}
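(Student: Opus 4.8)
The plan is to prove both directions by contraposition, using Proposition \ref{P-c} to pass back and forth between quasi-coincidence, $\widetilde{\subseteq}$, and complementation. Two consequences of Proposition \ref{P-c} do almost all the work: applying part (1) with $F_C^{c}$ in the role of $F_B$, together with the symmetry of FP-soft quasi-coincidence (plain from the definition, since $f_A(e)\not\subseteq f_B^{c}(e)$ just says $f_A(e)\cap f_B(e)\neq\varnothing$ and $\mu_A(e)+\mu_B(e)>1$ is symmetric), gives $F_C\,\overline{q}\,F_A\Leftrightarrow F_A\,\widetilde{\subseteq}\,F_C^{c}$; and part (5) gives $e_\alpha^{f}\,\widetilde{\in}\,F_C^{c}\Leftrightarrow e_\alpha^{f}\,\overline{q}\,F_C$. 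I shall also use the evident monotonicity: if $F_C\,\widetilde{\subseteq}\,F_D$ and $F_C\,q\,F_A$, then $F_D\,q\,F_A$.

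For necessity, suppose $e_\alpha^{f}\,\widetilde{\in}\,\overline{F_A}$ and let $F_D$ be any FP-Q-nbd of $e_\alpha^{f}$; assume toward a contradiction that $F_D\,\overline{q}\,F_A$. By the definition of FP-Q-nbd there is an FP-soft open set $F_C$ with $e_\alpha^{f}\,q\,F_C$ and $F_C\,\widetilde{\subseteq}\,F_D$, and monotonicity then forces $F_C\,\overline{q}\,F_A$, hence $F_A\,\widetilde{\subseteq}\,F_C^{c}$. Since $F_C$ is open, $F_C^{c}$ is an FP-soft closed set containing $F_A$, so $\overline{F_A}\,\widetilde{\subseteq}\,F_C^{c}$ and therefore $e_\alpha^{f}\,\widetilde{\in}\,F_C^{c}$; by Proposition \ref{P-c}(5) this means $e_\alpha^{f}\,\overline{q}\,F_C$, contradicting $e_\alpha^{f}\,q\,F_C$.

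For sufficiency I argue contrapositively: suppose $e_\alpha^{f}$ does not belong to $\overline{F_A}$, and consider $F_C:=(\overline{F_A})^{c}$. As $\overline{F_A}$ is FP-soft closed, $F_C$ is FP-soft open, and since $e_\alpha^{f}$ is not in $\overline{F_A}=F_C^{c}$, Proposition \ref{P-c}(5) gives $e_\alpha^{f}\,q\,F_C$; because $F_C\,\widetilde{\subseteq}\,F_C$ and $e_\alpha^{f}\,q\,F_C$, the set $F_C$ is itself an FP-Q-nbd of $e_\alpha^{f}$ (with itself as the witnessing open set). On the other hand $F_A\,\widetilde{\subseteq}\,\overline{F_A}$ yields $F_C=(\overline{F_A})^{c}\,\widetilde{\subseteq}\,F_A^{c}$, so Proposition \ref{P-c}(1) gives $F_C\,\overline{q}\,(F_A^{c})^{c}=F_A$. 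Thus $F_C$ is an FP-Q-nbd of $e_\alpha^{f}$ that is not quasi-coincident with $F_A$, which is exactly the required contrapositive.

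The argument is essentially bookkeeping with the dictionary ``$\overline{q}$ $\leftrightarrow$ containment in the complement''. The only steps deserving a line of care are the symmetry of FP-soft quasi-coincidence and its monotonicity under $\widetilde{\subseteq}$, which Proposition \ref{P-c} does not record explicitly but which are immediate from the defining inequalities; I anticipate no genuine obstacle.
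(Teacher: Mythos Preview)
Your proof is correct and follows essentially the same route as the paper's: both directions argue by contradiction via Proposition~\ref{P-c}(1) and (5), passing between $\overline{q}$ and containment in the complement, and using that the complement of an FP-soft open set is FP-soft closed. The only cosmetic differences are that in the converse you take the specific closed set $\overline{F_A}$ where the paper picks an arbitrary closed superset witnessing $e_\alpha^f\notin\overline{F_A}$, and you make explicit the symmetry and monotonicity of $q$ that the paper uses implicitly.
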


\begin{proof}
Let $e_{\alpha }^{f}\widetilde{\in }\overline{F_{A}}$. Suppose that $F_{C}$
is a FP-Q-nbd of $e_{\alpha }^{f}$ and $F_{C}\overline{q}F_{A}$. Then there
exists a FP-soft open set $F_{B}$ such that $e_{\alpha }^{f}qF_{B}\widetilde{%
\subseteq }F_{C}$. Since $F_{C}\overline{q}F_{A}$, by Proposition \ref{P-c}%
(1), $F_{A}\widetilde{\subseteq }F_{C}^{c}\widetilde{\subseteq }F_{B}^{c}$.
Again since $e_{\alpha }^{f}qF_{B}$, $e_{\alpha }^{f}$ does not belong to $%
F_{B}^{c}$. This is a contradiction with $\overline{F_{A}}\widetilde{%
\subseteq }F_{B}^{c}$.

Conversely, Let each Q-nbd of $e_{\alpha }^{f}$ be FP-soft quasi-coincident
with $F_{A}$. Suppose that $e_{\alpha }^{f}$ does not belong to $\overline{%
F_{A}}$. Then there exists a FP-soft closed set $F_{B}$ which is containing $%
F_{A}$ such that $e_{\alpha }^{f}$ does not belong to $F_{B}$. By
Proposition \ref{P-c}(5), we have $e_{\alpha }^{f}qF_{B}^{c}$. Then $%
F_{B}^{c}$ is a FP-Q-nbd of $e_{\alpha }^{f}$ and by Proposition \ref{P-c}%
(1), $F_{A}\overline{q}F_{B}^{c}$. This is a contradiction with the
hypothesis.
\end{proof}

\begin{definition}
Let $(X,{\Large \tau })$ be a FP-soft topological space and $F_{A}\in
FPS(X,E)$. The FP-soft interior of $F_{A}$ denoted by $F_{A}^{\circ }$ is
the union of all FP-soft open subsets of $F_{A}$.

Clearly, $F_{A}^{\circ }$ is the largest fuzzy soft open set contained in $%
F_{A}$ and $F_{A}^{\circ }$ is FP-soft open.
\end{definition}

\begin{theorem}
\label{ic-oz}Let $(X,{\Large \tau })$ be a FP-soft topological space and $%
F_{A}$, $F_{B}\in FPS(X,E)$. Then,

(1) $(F_{\varnothing })^{\circ }=F_{\varnothing }$ and $(F_{\widetilde{E}%
})^{\circ }=F_{\widetilde{E}}$.

(2) $F_{A}^{\circ }\widetilde{\subset }F_{A}$.

(3) $\left( F_{A}^{\circ }\right) ^{\circ }=F_{A}^{\circ }$.

(4) If $F_{A}\widetilde{\subset }F_{B}$, then $F_{A}^{\circ }\widetilde{%
\subset }F_{B}^{\circ }$.

(5) $F_{A}$ is a FP-soft open set if and only if $F_{A}=F_{A}^{\circ }$.

(6) $\left( F_{A}\widetilde{\cap }F_{B}\right) ^{\circ }=F_{A}^{\circ }%
\widetilde{\cap }F_{B}^{\circ }$.
\end{theorem}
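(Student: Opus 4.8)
The plan is to prove the six parts of Theorem~\ref{ic-oz} in order, relying on the definition of FP-soft interior as the union of all FP-soft open subsets and on the elementary properties of FP-soft inclusion, union and intersection established in Proposition~\ref{P-c} and the earlier propositions. Several parts are immediate from the definition: for (1), since $F_{\varnothing}$ and $F_{\widetilde{E}}$ are themselves FP-soft open by (T1), each is its own largest open subset, so $(F_{\varnothing})^{\circ}=F_{\varnothing}$ and $(F_{\widetilde{E}})^{\circ}=F_{\widetilde{E}}$. For (2), every FP-soft open subset of $F_{A}$ is $\widetilde{\subset}F_{A}$, hence so is their union $F_{A}^{\circ}$. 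For (4), if $F_{A}\widetilde{\subset}F_{B}$ then every FP-soft open subset of $F_{A}$ is also an FP-soft open subset of $F_{B}$, so the union defining $F_{A}^{\circ}$ is taken over a subfamily of the one defining $F_{B}^{\circ}$, giving $F_{A}^{\circ}\widetilde{\subset}F_{B}^{\circ}$.

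Next I would handle (5) and (3), which are nearly the same point. For (5), if $F_{A}$ is FP-soft open, then $F_{A}$ is the largest FP-soft open subset of itself, so $F_{A}=F_{A}^{\circ}$; conversely, if $F_{A}=F_{A}^{\circ}$, then $F_{A}$ is a union of members of ${\Large\tau}$, hence FP-soft open by (T3). For (3), note $F_{A}^{\circ}$ is FP-soft open by the remark following the definition of interior, so applying (5) with $F_{A}^{\circ}$ in place of $F_{A}$ yields $\left(F_{A}^{\circ}\right)^{\circ}=F_{A}^{\circ}$.

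The only part requiring real work is (6). For the inclusion $\left(F_{A}\widetilde{\cap}F_{B}\right)^{\circ}\widetilde{\subset}F_{A}^{\circ}\widetilde{\cap}F_{B}^{\circ}$, observe $F_{A}\widetilde{\cap}F_{B}\widetilde{\subset}F_{A}$ and $F_{A}\widetilde{\cap}F_{B}\widetilde{\subset}F_{B}$, so by (4) $\left(F_{A}\widetilde{\cap}F_{B}\right)^{\circ}\widetilde{\subset}F_{A}^{\circ}$ and $\left(F_{A}\widetilde{\cap}F_{B}\right)^{\circ}\widetilde{\subset}F_{B}^{\circ}$, and the intersection of these gives the claim. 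For the reverse inclusion, $F_{A}^{\circ}$ and $F_{B}^{\circ}$ are FP-soft open, so by (T2) $F_{A}^{\circ}\widetilde{\cap}F_{B}^{\circ}\in{\Large\tau}$; moreover by (2) this set satisfies $F_{A}^{\circ}\widetilde{\cap}F_{B}^{\circ}\widetilde{\subset}F_{A}\widetilde{\cap}F_{B}$, so it is an FP-soft open subset of $F_{A}\widetilde{\cap}F_{B}$ and is therefore contained in the union $\left(F_{A}\widetilde{\cap}F_{B}\right)^{\circ}$ defining the interior. Combining the two inclusions via the antisymmetry of $\widetilde{\subset}$ (Definition, part (2)) completes the proof.

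I do not anticipate any genuine obstacle; the subtlety, if any, is only bookkeeping with the ordering $\leq$ on the fuzzy parameter sets in tandem with $\subseteq$ on the approximate-function values, but since all of these operations ($\widetilde{\cap}$, $\widetilde{\cup}$, complement) act coordinatewise in $e\in E$ and were already shown to behave well, the argument stays at the level of formal manipulation of open families. I would present (1)--(5) tersely (a line or two each) and give (6) the two-inclusion treatment spelled out above.
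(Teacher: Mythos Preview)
Your proposal is correct; each of the six parts is argued soundly from the definition of the interior as the union of all FP-soft open subsets together with axioms (T1)--(T3). The paper itself does not spell out a proof of this theorem at all---it simply writes ``Similar to that of Theorem~\ref{kap-oz}''---and what you have written is precisely the dual of that closure argument, so your approach is exactly what the paper intends.
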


\begin{proof}
Similar to that of Theorem \ref{kap-oz}.
\end{proof}

\begin{theorem}
\label{ik}Let $(X,{\Large \tau })$ be a FP-soft topological space and $%
F_{A}\in FPS(X,E)$. Then,

(1) $\left( F_{A}^{\circ }\right) ^{c}=\overline{F_{A}^{c}}$.

(2)$\left( \overline{F_{A}}\right) ^{c}=\left( F_{A}^{c}\right) ^{\circ }$.
\end{theorem}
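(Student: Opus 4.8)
The plan is to deduce both equalities from the defining formulas for the FP-soft interior and closure, together with the De Morgan laws for arbitrary families (proved above in Section~2) and the fact that complementation is an order-reversing involution on $FPS(X,E)$.

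For (1), first I would write $F_A^{\circ}=\widetilde{\underset{i\in J}{{\LARGE \cup}}}F_{B_i}$, where $\{F_{B_i}:i\in J\}$ is the family of all FP-soft open sets with $F_{B_i}\widetilde{\subset}F_A$. Applying the De Morgan law for arbitrary unions gives $(F_A^{\circ})^{c}=\widetilde{\underset{i\in J}{{\LARGE \cap}}}F_{B_i}^{c}$. Now I would check that $\{F_{B_i}^{c}:i\in J\}$ is precisely the family of all FP-soft closed supersets of $F_A^{c}$: indeed, $F_{B_i}$ open means $F_{B_i}^{c}$ closed, and $F_{B_i}\widetilde{\subset}F_A$ is equivalent to $F_A^{c}\widetilde{\subset}F_{B_i}^{c}$ since complementation reverses $\widetilde{\subset}$ (immediate from the definitions of $\widetilde{\subset}$ and of $F_A^{c}$); conversely any FP-soft closed superset $F_C$ of $F_A^{c}$ satisfies $F_C=(F_C^{c})^{c}$ with $F_C^{c}$ open and $F_C^{c}\widetilde{\subset}F_A$, so $F_C$ occurs among the $F_{B_i}^{c}$. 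Hence the intersection on the right is, by definition, $\overline{F_A^{c}}$, which yields $(F_A^{\circ})^{c}=\overline{F_A^{c}}$.

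For (2), I would simply apply (1) with $F_A$ replaced by $F_A^{c}$, obtaining $((F_A^{c})^{\circ})^{c}=\overline{(F_A^{c})^{c}}=\overline{F_A}$, where $(F_A^{c})^{c}=F_A$ by the complement definition in Section~1. Taking complements of both sides and using the involution property once more gives $(F_A^{c})^{\circ}=(\overline{F_A})^{c}$, which is exactly (2).

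I do not expect any genuine obstacle here; the argument is essentially bookkeeping. The only point deserving a line of care is the claim that $F_C\mapsto F_C^{c}$ restricts to a bijection between the FP-soft open subsets of $F_A$ and the FP-soft closed supersets of $F_A^{c}$, and this rests entirely on two facts already recorded in the excerpt: $(F_C^{c})^{c}=F_C$, and that complementation reverses inclusion.
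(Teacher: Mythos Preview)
Your proof is correct and follows essentially the same route as the paper: write the interior as the union of open subsets, apply the De Morgan law for arbitrary families, and identify the resulting intersection as the closure of $F_A^{c}$ via the order-reversing bijection $F_B\mapsto F_B^{c}$ between open subsets of $F_A$ and closed supersets of $F_A^{c}$. The only cosmetic difference is that the paper dispatches part~(2) with ``The other is similar,'' whereas you derive it directly from~(1) by substituting $F_A^{c}$ for $F_A$ and taking complements; both are fine.
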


\begin{proof}
We only prove (1). The other is similar.

$%
\begin{array}{ll}
\left( F_{A}^{\circ }\right) ^{c} & =(\widetilde{\cup }\{F_{B}\left\vert
F_{B}\in \tau \text{, }F_{A}\widetilde{\subset }F_{B}\text{ }\right\} )^{c}
\\ 
& =\widetilde{\cap }\left\{ F_{B}^{c}\right. \left\vert F_{B}\in \tau \text{%
, }F_{A}\widetilde{\subset }F_{B}\text{ }\right\} \\ 
& =\widetilde{\cap }\left\{ F_{B}^{c}\right. \left\vert F_{B}^{c}\in \tau
^{\prime }\text{, }F_{B}^{c}\widetilde{\subset }F_{A}^{c}\text{ }\right\} \\ 
& =\overline{F_{A}^{c}}%
\end{array}%
$
\end{proof}

\begin{theorem}
\label{kap-op}Let $c:FPS(X,E)\rightarrow FPS(X,E)$ be an operator satisfying
the following:

(c1) $c(F_{\varnothing })=F_{\varnothing }$.

(c2) $F_{A}\widetilde{\subseteq }c(F_{A}),\forall F_{A}\in FPS(X,E)$.

(c3) $c(F_{A}\widetilde{\cup }F_{B})=c(F_{A})\widetilde{\cup }%
c(F_{B}),\forall F_{A},F_{B}\in FPS(X,E)$.

(c4) $c(c(F_{A}))=c(F_{A}),\forall F_{A}\in FPS(X,E)$.\newline
Then we can associate FP-soft topology in the following way:

$\tau =\{F_{A}^{c}\in FPS(X,E)|c(F_{A})=F_{A}\}.$\newline
Moreover with this FP-soft topology $\tau $, $\overline{F_{A}}=c(F_{A})$ for
every $F_{A}\in FPS(X,E)$.
\end{theorem}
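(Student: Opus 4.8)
The plan is to verify that $\tau$ is a FP-soft topology and then to identify $\overline{F_A}$ with $c(F_A)$, in that order. The first task breaks into checking (T1), (T2), (T3) for the collection $\tau=\{F_A^c : c(F_A)=F_A\}$, i.e., for the complements of the $c$-fixed points (the ``$c$-closed'' sets). For (T1): by (c1) $c(F_\varnothing)=F_\varnothing$, so $F_\varnothing$ is $c$-closed and $F_{\widetilde E}=F_\varnothing^c\in\tau$; for the other, by (c2) $F_{\widetilde E}\widetilde\subseteq c(F_{\widetilde E})\widetilde\subseteq F_{\widetilde E}$, so $c(F_{\widetilde E})=F_{\widetilde E}$ and $F_\varnothing=F_{\widetilde E}^c\in\tau$. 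For (T2), it suffices (taking complements via Proposition~2.7 and Proposition~3.2) to show that a finite union of $c$-closed sets is $c$-closed, which is immediate from (c3): if $c(F_A)=F_A$ and $c(F_B)=F_B$ then $c(F_A\widetilde\cup F_B)=c(F_A)\widetilde\cup c(F_B)=F_A\widetilde\cup F_B$.

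The real work is (T3), equivalently: an arbitrary intersection $\widetilde{\bigcap}_{i\in J}F_{A_i}$ of $c$-closed sets is $c$-closed. Here one uses monotonicity of $c$, which is \emph{not} an axiom but follows from (c3): if $F_A\widetilde\subseteq F_B$ then $F_B=F_A\widetilde\cup F_B$, so $c(F_B)=c(F_A)\widetilde\cup c(F_B)$, whence $c(F_A)\widetilde\subseteq c(F_B)$. Now for each $j\in J$ we have $\widetilde{\bigcap}_{i\in J}F_{A_i}\widetilde\subseteq F_{A_j}$, so $c(\widetilde{\bigcap}_{i\in J}F_{A_i})\widetilde\subseteq c(F_{A_j})=F_{A_j}$; intersecting over $j$ gives $c(\widetilde{\bigcap}_{i\in J}F_{A_i})\widetilde\subseteq \widetilde{\bigcap}_{i\in J}F_{A_i}$, and the reverse inclusion is (c2). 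Hence the intersection is $c$-closed, so its complement lies in $\tau$; this is where I expect the only genuine subtlety, namely remembering to derive monotonicity from (c3) rather than assuming it.

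Finally, to show $\overline{F_A}=c(F_A)$ for every $F_A$: by (c4) and (c2), $c(F_A)$ is $c$-closed and contains $F_A$, hence is a FP-soft closed superset of $F_A$, so $\overline{F_A}\widetilde\subseteq c(F_A)$ by definition of the FP-soft closure as the smallest such set. Conversely, $\overline{F_A}$ is FP-soft closed, so it is $c$-closed, i.e. $c(\overline{F_A})=\overline{F_A}$; since $F_A\widetilde\subseteq\overline{F_A}$, monotonicity of $c$ gives $c(F_A)\widetilde\subseteq c(\overline{F_A})=\overline{F_A}$. Combining the two inclusions yields $\overline{F_A}=c(F_A)$, completing the proof.
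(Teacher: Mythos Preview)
Your proof is correct and follows essentially the same route as the paper: verify (T1)--(T3) for the complements of the $c$-fixed sets, then establish both inclusions $\overline{F_A}\widetilde\subseteq c(F_A)$ and $c(F_A)\widetilde\subseteq\overline{F_A}$ via monotonicity and the minimality of the closure. The only noteworthy difference is that you explicitly derive monotonicity of $c$ from (c3), whereas the paper simply invokes ``since $c$ is order preserving'' without justification; your version is thus slightly more self-contained.
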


\begin{proof}
(T1) By (c1), $F_{\varnothing }^{c}=F_{\widetilde{E}}\in \tau $. By (c2) $F_{%
\widetilde{E}}\widetilde{\subseteq }c(F_{\widetilde{E}})$, so $c(F_{%
\widetilde{E}})=F_{\widetilde{E}}$ and $F_{\varnothing }\in \tau $.

(T2) Let $F_{A}$, $F_{B}$ $\in \tau $. By the definition of $\tau $, $%
c(F_{A}^{c})=F_{A}^{c}$ and $c(F_{B}^{c})=F_{B}^{c}$. By (c3), $c((F_{A}%
\widetilde{\cap }F_{B})^{c})=c(F_{A}^{c}\widetilde{\cup }%
F_{B}^{c})=c(F_{A}^{c})\widetilde{\cup }c(F_{B}^{c})=F_{A}^{c}\widetilde{%
\cup }F_{B}^{c}=(F_{A}\widetilde{\cap }F_{B})^{c}$. So $F_{A}\widetilde{\cap 
}F_{B}\in \tau $.

(T3) Let $\{F_{A_{i}}|i\in J\}\subset \tau $. Since $c$ is order preserving
and $\widetilde{\underset{i\in J}{{\LARGE \cap }}}F_{A_{i}}^{c}\widetilde{%
\subseteq }F_{A_{k}}^{c}$,$\forall k\in J$, then $c(\widetilde{\underset{%
i\in J}{{\LARGE \cap }}}F_{A_{i}}^{c})\widetilde{\subseteq }%
c(F_{A_{k}}^{c})=F_{A_{k}}^{c}$. Then we have $c(\widetilde{\underset{i\in J}%
{{\LARGE \cap }}}F_{A_{i}}^{c})\widetilde{\subseteq }\widetilde{\underset{%
i\in J}{{\LARGE \cap }}}F_{A_{i}}^{c}$. Conversely, by (c2) we have $%
\widetilde{\underset{i\in J}{{\LARGE \cap }}}F_{A_{i}}^{c}\widetilde{%
\subseteq }c(\widetilde{\underset{i\in J}{{\LARGE \cap }}}F_{A_{i}}^{c})$.
Hence, $c((\widetilde{\underset{i\in J}{{\LARGE \cup }}}F_{A_{i}})^{c})=c(%
\widetilde{\underset{i\in J}{{\LARGE \cap }}}F_{A_{i}}^{c})=\widetilde{%
\underset{i\in J}{{\LARGE \cap }}}F_{A_{i}}^{c}=(\widetilde{\underset{i\in J}%
{{\LARGE \cup }}}F_{A_{i}})^{c}$ and $\widetilde{\underset{i\in J}{{\LARGE %
\cup }}}F_{A_{i}}\in \tau $.

Now we will show that with this FP-soft topology $\tau $, $\overline{F_{A}}%
=c(F_{A})$ for every $F_{A}\in FPS(X,E)$. Let $F_{A}\in FPS(X,E)$. Since $(%
\overline{F_{A}})^{c}\in \tau $, then $c(\overline{F_{A}})=\overline{F_{A}}$%
. Since $c$ is order preserving $c(F_{A})\widetilde{\subseteq }c(\overline{%
F_{A}})=\overline{F_{A}}$. Conversely, by (c4) we have $(c(F_{A}))^{c}\in
\tau $. Then since $F_{A}\widetilde{\subseteq }c(F_{A})$ and $\overline{F_{A}%
}$ is the smallest FP-soft closed set over $X$ which contains $F_{A}$, $%
\overline{F_{A}}\widetilde{\subseteq }c(F_{A})$.
\end{proof}

The operator $c$ is called the FP-soft closure operator.

\begin{remark}
By Theorem \ref{kap-oz}(1),(2),(3) and (6) and Theorem \ref{kap-op}, we see
that with a FP-soft closure operator we can associate a FP-soft topology and
conversely with a given FP-soft topology we can associte a FP-soft closure
operator.
\end{remark}

\begin{theorem}
\label{ic}Let $i:FPS(X,E)\rightarrow FPS(X,E)$ be an operator satisfying the
following:

(i1) $i(F_{\widetilde{E}})=F_{\widetilde{E}}$.

(i2) $i(F_{A})\widetilde{\subseteq }F_{A}$, $\forall F_{A}\in FPS(X,E)$.

(i3) $i(F_{A}\widetilde{\cap }F_{B})=i(F_{A})\widetilde{\cap }i(F_{B})$, $%
\forall F_{A},F_{B}\in FPS(X,E)$.

(i4) $i(i(F_{A}))=i(F_{A})$, $\forall F_{A}\in FPS(X,E)$.\newline
Then we can associate a FP-soft topology in the following way:

$\tau =\{F_{A}\in FPS(X,E)|i(F_{A})=F_{A}\}$.\newline
Moreover, with this fuzzy soft topology $\tau $,$(F_{A})^{\circ }$ $%
=i(F_{A}) $ for every $F_{A}\in FPS(X,E)$.
\end{theorem}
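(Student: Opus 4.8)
The plan is to mirror the proof of Theorem~\ref{kap-op} almost verbatim, exploiting the duality between interior-type operators and closure-type operators. First I would verify the three topology axioms (T1)--(T3) for $\tau=\{F_A\in FPS(X,E)\mid i(F_A)=F_A\}$. For (T1): by (i1) we immediately get $F_{\widetilde{E}}\in\tau$, and by (i2) applied to $F_{\varnothing}$ together with $F_{\varnothing}\widetilde{\subseteq}i(F_{\varnothing})$ (which holds since $F_{\varnothing}$ is the least element) we get $i(F_{\varnothing})=F_{\varnothing}$, so $F_{\varnothing}\in\tau$. For (T2): if $F_A,F_B\in\tau$ then (i3) gives $i(F_A\widetilde{\cap}F_B)=i(F_A)\widetilde{\cap}i(F_B)=F_A\widetilde{\cap}F_B$. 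For (T3): one first notes that (i3) forces $i$ to be order-preserving (if $F_A\widetilde{\subseteq}F_B$ then $F_A=F_A\widetilde{\cap}F_B$, so $i(F_A)=i(F_A)\widetilde{\cap}i(F_B)\widetilde{\subseteq}i(F_B)$); then for a family $\{F_{A_i}\}_{i\in J}\subseteq\tau$ we have $F_{A_k}=i(F_{A_k})\widetilde{\subseteq}i(\widetilde{\cup}_{i\in J}F_{A_i})$ for every $k$, hence $\widetilde{\cup}_{i\in J}F_{A_i}\widetilde{\subseteq}i(\widetilde{\cup}_{i\in J}F_{A_i})$, and the reverse inclusion is (i2); so $\widetilde{\cup}_{i\in J}F_{A_i}\in\tau$.

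Next I would establish the identification $(F_A)^{\circ}=i(F_A)$. For one direction, since $i(i(F_A))=i(F_A)$ by (i4), we have $i(F_A)\in\tau$, i.e.\ $i(F_A)$ is FP-soft open; combined with $i(F_A)\widetilde{\subseteq}F_A$ from (i2), $i(F_A)$ is an FP-soft open subset of $F_A$, and since $(F_A)^{\circ}$ is by definition the union of all such, $i(F_A)\widetilde{\subseteq}(F_A)^{\circ}$. For the reverse direction, $(F_A)^{\circ}$ is FP-soft open, so $i((F_A)^{\circ})=(F_A)^{\circ}$; using that $i$ is order-preserving and $(F_A)^{\circ}\widetilde{\subseteq}F_A$, we get $(F_A)^{\circ}=i((F_A)^{\circ})\widetilde{\subseteq}i(F_A)$. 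Combining, $(F_A)^{\circ}=i(F_A)$.

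Alternatively, and perhaps more economically, I could derive this theorem from Theorem~\ref{kap-op} by a dualization argument: given an operator $i$ satisfying (i1)--(i4), define $c(F_A)=\big(i(F_A^{c})\big)^{c}$, check using Proposition~2.1(1)--(2) and De Morgan that $c$ satisfies (c1)--(c4), apply Theorem~\ref{kap-op} to obtain a topology $\tau'=\{F_A^{c}\mid c(F_A)=F_A\}$ with $\overline{F_A}=c(F_A)$, observe that $\tau'$ coincides with $\tau$ above, and finally invoke Theorem~\ref{ik}(2) to conclude $(F_A)^{\circ}=(\overline{F_A^{c}})^{c}=(c(F_A^{c}))^{c}=\big((i(F_A))^{c}\big)^{c}=i(F_A)$. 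I would probably present the direct proof for clarity but mention the duality in one sentence.

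The routine parts are all the axiom checks, which are purely formal manipulations paralleling Theorem~\ref{kap-op}. The only point demanding a moment's care is the deduction that (i3) implies $i$ is order-preserving --- this is needed both for (T3) and for the reverse inclusion in $(F_A)^{\circ}\widetilde{\subseteq}i(F_A)$ --- and, relatedly, the small observation in (T1) that (i2) together with minimality of $F_{\varnothing}$ pins down $i(F_{\varnothing})=F_{\varnothing}$. Neither is a genuine obstacle; the proof should read ``similar to that of Theorem~\ref{kap-op}'' with these two remarks spelled out.
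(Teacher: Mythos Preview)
Your proposal is correct and follows exactly the approach the paper intends: the paper's proof consists solely of the line ``Similar to that of Theorem~\ref{kap-op},'' and you have faithfully spelled out that dual argument (including the two small observations about order-preservation from (i3) and $i(F_{\varnothing})=F_{\varnothing}$). Your optional dualization via $c(F_A)=(i(F_A^{c}))^{c}$ is a nice alternative but not what the paper does.
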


\begin{proof}
Similar to that of Theorem \ref{kap-op}.
\end{proof}

The operator $i$ is called the FP-soft interior operator.

\begin{remark}
By Theorem \ref{ic-oz}(1),(2),(3) and (6) and Theorem \ref{ic}, we see that
with a FP-soft interior operator we can associate a FP-soft topology and
conversely with a given FP-soft topology we can associate a FP-soft interior
operator.
\end{remark}

\begin{definition}
Let $(X,{\Large \tau })$ be a FP-soft topological space. A subcollection $%
\mathcal{B}$ of ${\Large \tau }$ is called a base for ${\Large \tau }$ if
every member of ${\Large \tau }$ can be expressed as a union of members of $%
\mathcal{B}$.
\end{definition}

\begin{example}
If we consider the FP-soft topology ${\Large \tau }$ in Example \ref{to},
then one easily see that the family $\mathcal{B=}\{F_{\varnothing
},F_{A_{1}},F_{A_{2}},F_{A_{3}},F_{\widetilde{E}}\}$ is a basis for ${\Large %
\tau }$.
\end{example}

\begin{proposition}
Let $(X,{\Large \tau })$ be a FP-soft topological space and $\mathcal{B}$ is
subfamily of ${\Large \tau }$. $\mathcal{B}$ is a base for ${\Large \tau }$
if and only if for each $e_{\alpha }^{f}$ in $FPS(X,E)$ and for each FP-soft
open Q-nbd $F_{A}$ of $e_{\alpha }^{f}$, there exists a $F_{B}\in \mathcal{B}
$ such that $e_{\alpha }^{f}qF_{B}\widetilde{\subseteq }F_{A}$.
\end{proposition}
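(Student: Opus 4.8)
The plan is to prove the biconditional in the two standard directions, using Proposition 4 (the representation of every FP-soft set as the union of the FP-soft points belonging to it) and Proposition 6 together with the characterisation of quasi-coincidence via points.

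\smallskip

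\noindent\textbf{($\Rightarrow$)} Assume $\mathcal{B}$ is a base for ${\Large \tau}$. Let $e_{\alpha}^{f}\in FPS(X,E)$ and let $F_{A}\in{\Large \tau}$ be a FP-soft open Q-nbd of $e_{\alpha}^{f}$, so $e_{\alpha}^{f}qF_{A}$. Since $F_{A}=\widetilde{\cup}_{j\in J}F_{B_{j}}$ for some subfamily $\{F_{B_{j}}:j\in J\}\subseteq\mathcal{B}$, Proposition 6 gives some index $j_{0}\in J$ with $e_{\alpha}^{f}qF_{B_{j_{0}}}$. Put $F_{B}=F_{B_{j_{0}}}\in\mathcal{B}$; then $e_{\alpha}^{f}qF_{B}$ and $F_{B}\widetilde{\subseteq}F_{A}$ (as $F_{B}$ is one of the members whose union is $F_{A}$), which is exactly what is required.

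\smallskip

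\noindent\textbf{($\Leftarrow$)} Conversely, assume the point/Q-nbd condition. Fix $F_{A}\in{\Large \tau}$; I must express $F_{A}$ as a union of members of $\mathcal{B}$. For each FP-soft point $e_{\alpha}^{f}$ with $e_{\alpha}^{f}qF_{A}$, the hypothesis furnishes a member $F_{B(e_{\alpha}^{f})}\in\mathcal{B}$ with $e_{\alpha}^{f}qF_{B(e_{\alpha}^{f})}\widetilde{\subseteq}F_{A}$. Let $\mathcal{C}=\{F_{B(e_{\alpha}^{f})}: e_{\alpha}^{f}\in FPS(X,E),\ e_{\alpha}^{f}qF_{A}\}\subseteq\mathcal{B}$ and set $F_{D}=\widetilde{\cup}_{F_{B}\in\mathcal{C}}F_{B}$. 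Since every member of $\mathcal{C}$ is contained in $F_{A}$, we get $F_{D}\widetilde{\subseteq}F_{A}$. For the reverse inclusion I argue via points and quasi-coincidence: if $F_{D}\widetilde{\subsetneq}F_{A}$ were proper, then $F_{A}$ is not contained in $F_{D}$, so by Proposition \ref{P-c}(1) we would have $F_{A}qF_{D}^{c}$, and by Proposition \ref{P-c}(4) there is a point $e_{\alpha}^{f}\widetilde{\in}F_{A}$ with $e_{\alpha}^{f}qF_{D}^{c}$; by Proposition \ref{P-c}(5) this says $e_{\alpha}^{f}\widetilde{\not\in}(F_{D}^{c})^{c}=F_{D}$. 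But $e_{\alpha}^{f}\widetilde{\in}F_{A}$ together with $e_{\alpha}^{f}qF_{A}$ — one takes the witnessing point of $F_{A}$ that fails $e_{\alpha}^{f}\widetilde{\in}F_{D}$ and notes it is quasi-coincident with $F_{A}$ — means $F_{B(e_{\alpha}^{f})}\in\mathcal{C}$, hence $e_{\alpha}^{f}qF_{B(e_{\alpha}^{f})}\widetilde{\subseteq}F_{D}$, and by Proposition \ref{P-c}(6) $e_{\alpha}^{f}qF_{D}$, contradicting $e_{\alpha}^{f}\widetilde{\not\in}F_{D}$ via Proposition \ref{P-c}(5) applied again. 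Hence $F_{D}=F_{A}$ and $F_{A}$ is a union of members of $\mathcal{B}$, so $\mathcal{B}$ is a base.

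\smallskip

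\noindent The delicate point is the reverse inclusion in ($\Leftarrow$): one must be careful that ``$q$-coincidence with $F_{A}$'' is the right family of points to index $\mathcal{C}$ by — membership-points alone will not reconstruct $F_{A}$ at the ``top'' membership level $\mu_{A}(e)$, whereas the quasi-coincident points (those with $\alpha+\mu_{A}(e)>1$ or meeting $f_{A}(e)$) do probe the whole of $F_{A}$, which is precisely why the base characterisation is phrased in terms of Q-nbds rather than ordinary neighbourhoods. The cleanest way to package this is to verify directly that for all $e\in E$, $\sup\{\alpha : e_{\alpha}^{f}qF_{D}\}$ and $\bigcup\{f(e): e_{\alpha}^{f}qF_{D}\}$ recover $\mu_{A}(e)$ and $f_{A}(e)$; I expect this bookkeeping with suprema of membership values to be the only real work, everything else being the formal manipulations of Proposition \ref{P-c}.
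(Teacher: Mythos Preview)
Your forward direction is fine and, by invoking the proposition that $e_{\alpha}^{f}q\,\widetilde{\cup}_{i}F_{A_i}$ implies $e_{\alpha}^{f}qF_{A_{i_0}}$ for some $i_0$, is actually cleaner than the paper's contradiction argument.

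The converse, however, has two genuine errors. First, from $e_{\alpha}^{f}\widetilde{\in}F_{A}$ you cannot infer $e_{\alpha}^{f}qF_{A}$: membership means $\alpha\leq\mu_{A}(e)$ and $f(e)\subseteq f_{A}(e)$, while quasi-coincidence requires $\alpha+\mu_{A}(e)>1$ or $f(e)\not\subseteq f_{A}^{c}(e)$; neither follows (take $\alpha$ and $\mu_{A}(e)$ both small and $f(e)=\varnothing$). Your parenthetical hand-wave does not repair this. Second, your ``contradiction'' at the end is not one: by Proposition~\ref{P-c}(5), $e_{\alpha}^{f}qF_{D}$ is equivalent to $e_{\alpha}^{f}\widetilde{\notin}F_{D}^{c}$, which is perfectly compatible with $e_{\alpha}^{f}\widetilde{\notin}F_{D}$ (in the fuzzy/FP-soft setting a point can fail to belong to both a set and its complement). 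So the chain of implications collapses at both ends.

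The ``direct bookkeeping'' you propose in the last paragraph is also more delicate than you expect, because the disjunctive definition of $q$ mixes the membership and approximate-function components: from $e_{\alpha}^{f}qF_{B}$ you cannot control $\mu_{B}(e)$ and $f_{B}(e)$ simultaneously. The paper avoids all of this by arguing the contrapositive: assuming $\mathcal{B}$ is not a base, one forms $F_{C}=\widetilde{\cup}\{F_{B}\in\mathcal{B}:F_{B}\widetilde{\subseteq}F_{A}\}\neq F_{A}$, picks $e\in E$ witnessing the strict inclusion, and \emph{explicitly constructs} the offending point by setting $\alpha=1-\mu_{C}(e)$ and $f(e)=f_{C}^{c}(e)$. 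With this concrete choice one checks directly that $e_{\alpha}^{f}qF_{A}$ while $e_{\alpha}^{f}\overline{q}F_{B}$ for every $F_{B}\in\mathcal{B}$ contained in $F_{A}$, violating the hypothesis. I recommend you rewrite the converse along these lines rather than trying to salvage the union-of-chosen-basics argument.
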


\begin{proof}
Let $\mathcal{B}$ be a base for ${\Large \tau }$, $e_{\alpha }^{f}\widetilde{%
\in }FPS(X,E)$ and $F_{A}$ be a FP-soft open Q-nbd of $e_{\alpha }^{f}$.
Then there exists a subfamily $\mathcal{B}^{\prime }$ of $\mathcal{B}$ such
that $F_{A}=\widetilde{{\LARGE \cup }}\left\{ F_{B}\right\vert F_{B}\in 
\mathcal{B}^{\prime }\}$. Suppose that $e_{\alpha }^{f}\overline{q}F_{B}$
for all $F_{B}\in \mathcal{B}^{\prime }$. Then $\alpha +\mu _{B}(e)\leq 1$
and $f(e)\subseteq f_{B}^{c}(e)$ for every $F_{B}\in \mathcal{B}^{\prime }$.
Therefore, we have $\alpha +\mu _{A}(e)\leq 1$ and $f(e)$ is not subset of $%
f_{A}(e)$ since $\mu _{A}(e)=\sup \left\{ \mu _{B}(e)\right\vert F_{B}\in 
\mathcal{B}^{\prime }\}$ and $f_{A}(e)={\LARGE \cup }f_{B}(e)$. This is
contradiction.

Conversely, If $\mathcal{B}$ is not a base for ${\Large \tau }$, then there
exists a $F_{A}\in {\Large \tau }$ such that $F_{C}=\widetilde{{\LARGE \cup }%
}\{F_{B}\in \mathcal{B}:F_{B}\widetilde{\subseteq }F_{A}\}\neq F_{A}$. Since 
$F_{C}\neq F_{A}$, there exists $e\in E$ such that $\mu _{C}(e)<\mu _{A}(e)$
or $f_{C}(e)\subset f_{A}(e)$. Put $\alpha =1-\mu _{C}(e)$ and $%
f(e)=f_{C}^{c}(e)$. Then in both case, we obtain that $e_{\alpha }^{f}qF_{A}$
and $e_{\alpha }^{f}\overline{q}F_{C}$. Therefore, we have $\mu
_{B}(e)+\alpha \leq \mu _{C}(e)+\alpha =1$ and $f(e)\cap f_{B}(e)\subseteq
f(e)\cap f_{C}(e)=\varnothing $; that is, $e_{\alpha }^{f}\overline{q}F_{B}$
for all $F_{B}\in \mathcal{B}$ which contained in $F_{A}$. This is a
contradiction.
\end{proof}

\begin{definition}
Let $(X,{\Large \tau }_{1})$ and $(Y,{\Large \tau }_{2})$ be two FP-soft
topological spaces. A FP-soft mapping $f_{up}:(X,{\Large \tau }%
_{1})\rightarrow (Y,{\Large \tau }_{2})$ is called FP-soft continuous if $%
f_{up}^{-1}(G_{S})\in {\Large \tau }_{1}$, $\forall G_{S}\in {\Large \tau }%
_{2}$.
\end{definition}

\begin{example}
Assume that $X=\{x_{1},x_{2},x_{3}\}$, $Y=\{y_{1},y_{2},y_{3}\}$ are two
universal sets, $E=\{e_{1},e_{2}\}$, $K=\{k_{1},k_{2}\}$ are two parameter
sets and $f_{up}:(X,{\Large \tau }_{1})\rightarrow (Y,{\Large \tau }_{2})$
is a FP-soft mapping, where $u(x_{1})=y_{2}$, $u(x_{2})=y_{1}$, $%
u(x_{3})=y_{3}$ and $p(e_{1})=k_{2}$, $p(e_{2})=k_{1}$. If we take $%
F_{A}=\{((e_{1})_{0,3},\{x_{2},x_{3}\}),((e_{2})_{0,2},\{x_{1},x_{2}\})\}$, $%
G_{S}=\{((k_{1})_{0,2},\{y_{1},y_{2}\}),((k_{2})_{0,3},\{y_{1},y_{3}\})\}$, $%
{\Large \tau }_{1}=$ $\{F_{\varnothing },F_{\widetilde{E}},F_{A}\}$ and $%
{\Large \tau }_{2}=$ $\{\widetilde{0}_{K},\widetilde{1}_{K},G_{S}\}$, then $%
f_{up}$ is a FP-soft continuous mapping.
\end{example}

The constant mapping $f_{up}:(X,{\Large \tau }_{1})\rightarrow (Y,{\Large %
\tau }_{2})$ not continuous in general.

\begin{example}
Assume that $X=\{x_{1},x_{2},x_{3}\}$, $Y=\{y_{1},y_{2}\}$ are two universal
sets, $E=\{e_{1},e_{2}\}$, $K=\{k_{1},k_{2}\}$ are two parameter sets and $%
f_{up}:(X,{\Large \tau }_{1})\rightarrow (Y,{\Large \tau }_{2})$ is a
constant FP-soft mapping, where $u(x_{1})=u(x_{2})=u(x_{3})=y_{2}$ and $%
p(e_{1})=p(e_{2})=k_{1}$. If we take $G_{S}=\{((k_{1})_{0,2},\{y_{1},y_{2}%
\}),((k_{2})_{0,5},\{y_{2},y_{3}\})\}$, ${\Large \tau }_{1}=$ $%
\{F_{\varnothing },F_{\widetilde{E}}\}$ and ${\Large \tau }_{2}=$ $%
\{F_{\varnothing },F_{\widetilde{K}},G_{S}\}$, $f_{up}$ is not a FP-soft
continuous since $f_{up}^{-1}(G_{S})\notin {\Large \tau }_{1}$.
\end{example}

Let $\alpha \in \lbrack 0,1]$. We denote by $\alpha _{E}$ the constant fuzzy
set on $E$, i.e $\mu _{\alpha _{E}}(e)=\alpha $ for all $e\in E$ and $\alpha
\in \lbrack 0,1]$.

\begin{definition}
Let $F_{A}\in FPS(X,E)$. $F_{A}$ is called $\alpha -A$-universal FP-soft set
if $\mu _{A}(e)=\alpha $ and $f_{A}(e)=X$ for all $e\in A$, denoted by $F_{%
\widetilde{\alpha }_{A}}$.
\end{definition}

\begin{definition}
(see \cite{Bh})A FP-soft topology is called enriched if it satisfies $F_{%
\widetilde{\alpha }_{A}}\in \tau $ and $F_{\widetilde{\alpha }_{A}}^{c}\in
\tau $ for all $\alpha \in (0,1]$.
\end{definition}

\begin{theorem}
Let $(X,{\Large \tau }_{1})$ be a enriched FP-soft topological space, $(Y,%
{\Large \tau }_{2})$ be a FP-soft topological space and $f_{up}:FPS(X,E)%
\rightarrow FPS(Y,K)$ be a constant FP-soft mapping. Then $f_{up}$ is
FP-soft continuous.
\end{theorem}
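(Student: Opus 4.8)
The plan is to fix an arbitrary $G_{S}\in \tau _{2}$ and verify that its pre-image $f_{up}^{-1}(G_{S})$ lies in $\tau _{1}$; by the definition of FP-soft continuity that is all that is required. Since $f_{up}$ is constant, fix $y_{0}\in Y$ and $k_{0}\in K$ with $u(x)=y_{0}$ for all $x\in X$ and $p(e)=k_{0}$ for all $e\in E$. Writing $F_{A}=f_{up}^{-1}(G_{S})$ and unwinding the definition of the pre-image, one gets, for every $e\in E$, $\mu _{A}(e)=\mu _{p^{-1}(S)}(e)=\mu _{S}(p(e))=\mu _{S}(k_{0})$ and $f_{A}(e)=u^{-1}(g_{S}(p(e)))=u^{-1}(g_{S}(k_{0}))$. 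Because $u$ is constant with value $y_{0}$, the set $u^{-1}(g_{S}(k_{0}))$ equals $X$ when $y_{0}\in g_{S}(k_{0})$ and $\varnothing$ when $y_{0}\notin g_{S}(k_{0})$. So, putting $\alpha =\mu _{S}(k_{0})\in [0,1]$, the pre-image $F_{A}$ has one of two constant shapes: $\mu _{A}\equiv \alpha $ with $f_{A}\equiv X$, or $\mu _{A}\equiv \alpha $ with $f_{A}\equiv \varnothing $. (It is automatically a legitimate FP-soft set: if $\mu _{A}(e)=0$ then $\mu _{S}(k_{0})=0$, hence $g_{S}(k_{0})=\varnothing$ and $f_{A}(e)=\varnothing$.)

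I would then argue by cases on these shapes, invoking the enriched condition in the form $F_{\widetilde{\alpha }_{E}}\in \tau _{1}$ and $F_{\widetilde{\alpha }_{E}}^{c}\in \tau _{1}$ for $\alpha \in (0,1]$. If $f_{A}\equiv X$ then $\alpha \neq 0$ (otherwise $\mu _{S}(k_{0})=0$, whence $g_{S}(k_{0})=\varnothing$ and $f_{A}\equiv \varnothing$), so $\alpha \in (0,1]$ and $F_{A}=F_{\widetilde{\alpha }_{E}}$; this is $F_{\widetilde{E}}\in \tau _{1}$ by (T1) when $\alpha =1$, and lies in $\tau _{1}$ by enrichedness when $\alpha \in (0,1)$. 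If $f_{A}\equiv \varnothing $ and $\alpha =0$, then $F_{A}=F_{\varnothing }\in \tau _{1}$ by (T1). If $f_{A}\equiv \varnothing $ and $\alpha \in (0,1)$, note that $F_{\widetilde{1-\alpha }_{E}}^{c}$ has membership function $1-(1-\alpha )=\alpha $ and approximate function $X-X=\varnothing $, so $F_{A}=F_{\widetilde{1-\alpha }_{E}}^{c}\in \tau _{1}$, again by enrichedness since $1-\alpha \in (0,1)$.

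The only delicate case is $f_{A}\equiv \varnothing $ together with $\alpha =1$: here $F_{A}$ has membership constantly $1$ and approximate function constantly $\varnothing $, and this is not $F_{\widetilde{\beta }_{E}}^{c}$ for any admissible $\beta $, so the enriched axiom by itself does not finish the job. The plan for this corner is to realize $F_{A}$ as a union of enriched open sets: for each $\beta \in (0,1)$ the set $F_{\widetilde{\beta }_{E}}^{c}\in \tau _{1}$ has membership $1-\beta $ and approximate function $\varnothing $, so by the definition of the union of an arbitrary family of FP-soft sets, $\widetilde{\bigcup}_{\beta \in (0,1)}F_{\widetilde{\beta }_{E}}^{c}$ has membership $\sup _{\beta \in (0,1)}(1-\beta )=1$ at every $e\in E$ and approximate function $\varnothing$ at every $e\in E$; thus it equals $F_{A}$, and it lies in $\tau _{1}$ by (T3). (The same device, with the index set trimmed to $\{\beta \in (0,1):1-\beta \le \alpha \}$, in fact covers every $\alpha \in (0,1]$ with $f_{A}\equiv \varnothing $ at once.) Since $G_{S}\in \tau _{2}$ was arbitrary, $f_{up}^{-1}(G_{S})\in \tau _{1}$ in all cases, and $f_{up}$ is FP-soft continuous. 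I expect this $\alpha =1$, empty-approximate-value corner to be the main obstacle: it is the one place where enrichedness must be combined with closure of $\tau _{1}$ under arbitrary unions rather than used on its own, and where one must be careful that the required membership value $1$ only appears as a supremum not attained by any single $F_{\widetilde{\beta }_{E}}^{c}$.
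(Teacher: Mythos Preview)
Your argument is correct and follows the same route as the paper: compute $f_{up}^{-1}(G_S)$ explicitly using $u\equiv y_0$ and $p\equiv k_0$, observe that the result has constant membership $\alpha$ and constant approximate value (either $X$ or $\varnothing$), and invoke enrichedness. In fact your version is tighter than the paper's in several places: the paper sets $\alpha=\sup_{k\in K}\mu_S(k)$ (your value $\alpha=\mu_S(k_0)$ is what $p^{-1}(S)$ actually gives for constant $p$), and it takes $u^{-1}(g_S(p(e)))=X$ whenever $g_S(p(e))\neq\varnothing$ (your condition $y_0\in g_S(k_0)$ is the correct one for constant $u$). The paper then simply concludes ``$F_A=F_{\widetilde{\alpha}_E}$ or $F_A=F_{\widetilde{\alpha}_E}^c$'' and stops; it does not isolate the corner case $\mu_A\equiv 1$, $f_A\equiv\varnothing$ that you flagged. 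Your device of realizing that set as $\widetilde{\bigcup}_{\beta\in(0,1)}F_{\widetilde{\beta}_E}^{\,c}$ and applying (T3) is a correct way to close that gap, and goes beyond what the paper's own proof supplies.
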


\begin{proof}
Let $G_{S}\in {\Large \tau }_{2}$. Put $f_{up}^{-1}(G_{S})=F_{A}$. Then $%
A=p^{-1}(S)=\alpha _{E}$ where $\alpha =\underset{k\in K}{\sup }\{\mu
_{S}(k)\}$ and \newline
$f_{A}(e)=u^{-1}(g_{S}(p(e))=\left\{ 
\begin{array}{cl}
X & \text{, }g_{S}(p(e))\neq \varnothing \\ 
\varnothing & \text{, otherwise}%
\end{array}%
\right. $ for all $e\in E$. Hence $F_{A}=F_{\widetilde{\alpha }_{E}}\in 
{\Large \tau }_{1}$ or $F_{A}=F_{\widetilde{\alpha }_{E}}^{c}\in {\Large %
\tau }_{1}$ and so $f_{up}:(X,{\Large \tau }_{1})\rightarrow (Y,{\Large \tau 
}_{2})$ is FP-soft continuous.
\end{proof}

\begin{theorem}
Let $(X,{\Large \tau }_{1})$ and $(Y,{\Large \tau }_{2})$ be two FP-soft
topological spaces and $f_{up}:FPS(X,E)\rightarrow FPS(Y,K)$ be a FP-soft
mapping. Then the following are equivalent:

(1) $f_{up}$ is FP-soft continuous;

(2) $f_{up}^{-1}(G_{S})$ is FP-soft closed for every FP- closed set $G_{S}$
over $Y$;

(3) $f_{up}(\overline{F_{A}})\widetilde{\subset }\overline{f_{up}(F_{A})}$, $%
\forall F_{A}\in FPS(X,E)$;

(4) $\overline{f_{up}^{-1}(G_{S})}\widetilde{\subset }f_{up}^{-1}(\overline{%
G_{S}})$, $\forall G_{S}\in FPS(Y,K)$;

(5) $f_{up}^{-1}(G_{S}^{\circ })\widetilde{\subset }(f_{up}^{-1}(G_{S}))^{%
\circ }$, $\forall G_{S}\in FPS(Y,K)$.
\end{theorem}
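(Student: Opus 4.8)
The plan is to prove the cycle of implications $(1)\Rightarrow(2)\Rightarrow(3)\Rightarrow(4)\Rightarrow(5)\Rightarrow(1)$, using at each stage the De~Morgan-type identities for FP-soft sets (Proposition~2, i.e.\ the numbered proposition stating $(F_A\widetilde{\cup}F_B)^c=F_A^c\widetilde{\cap}F_B^c$ etc.), the commutation of $f_{up}^{-1}$ with complements (Theorem~\ref{fo}(9)), the monotonicity properties of $f_{up}$ and $f_{up}^{-1}$ (Theorem~\ref{fo}(1),(2)), the containments $F_A\widetilde{\subset}f_{up}^{-1}(f_{up}(F_A))$ and $f_{up}(f_{up}^{-1}(G_S))\widetilde{\subset}G_S$ (Theorem~\ref{fo}(3),(4)), the duality $(\overline{F_A})^c=(F_A^c)^\circ$ (Theorem~\ref{ik}), and the characterizations of closure/interior as smallest closed superset / largest open subset.

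First, $(1)\Rightarrow(2)$: if $G_S$ is FP-soft closed then $G_S^c\in{\Large\tau}_2$, so by continuity $f_{up}^{-1}(G_S^c)\in{\Large\tau}_1$; but by Theorem~\ref{fo}(9) $f_{up}^{-1}(G_S^c)=(f_{up}^{-1}(G_S))^c$, hence $f_{up}^{-1}(G_S)$ is FP-soft closed. Next, $(2)\Rightarrow(3)$: $\overline{f_{up}(F_A)}$ is FP-soft closed over $Y$, so by $(2)$ its preimage $f_{up}^{-1}(\overline{f_{up}(F_A)})$ is FP-soft closed over $X$; since $f_{up}(F_A)\widetilde{\subset}\overline{f_{up}(F_A)}$ gives, via Theorem~\ref{fo}(3) and monotonicity, $F_A\widetilde{\subset}f_{up}^{-1}(f_{up}(F_A))\widetilde{\subset}f_{up}^{-1}(\overline{f_{up}(F_A)})$, this closed set contains $F_A$, so it contains $\overline{F_A}$; applying $f_{up}$ and using Theorem~\ref{fo}(4) yields $f_{up}(\overline{F_A})\widetilde{\subset}f_{up}(f_{up}^{-1}(\overline{f_{up}(F_A)}))\widetilde{\subset}\overline{f_{up}(F_A)}$.

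Then $(3)\Rightarrow(4)$: apply $(3)$ to $F_A:=f_{up}^{-1}(G_S)$ to get $f_{up}(\overline{f_{up}^{-1}(G_S)})\widetilde{\subset}\overline{f_{up}(f_{up}^{-1}(G_S))}\widetilde{\subset}\overline{G_S}$ (the last step using Theorem~\ref{fo}(4) and monotonicity of closure); now take $f_{up}^{-1}$ of both ends and use Theorem~\ref{fo}(3) to obtain $\overline{f_{up}^{-1}(G_S)}\widetilde{\subset}f_{up}^{-1}(f_{up}(\overline{f_{up}^{-1}(G_S)}))\widetilde{\subset}f_{up}^{-1}(\overline{G_S})$. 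For $(4)\Rightarrow(5)$: replace $G_S$ by $G_S^c$ in $(4)$, take complements of both sides, and apply the duality of Theorem~\ref{ik} together with Theorem~\ref{fo}(9): $\big(\overline{f_{up}^{-1}(G_S^c)}\big)^c=\big(f_{up}^{-1}(\overline{G_S^c})\big)^c$ becomes $\big((f_{up}^{-1}(G_S))^c\big)^{c\,\circ}$-type manipulations giving $f_{up}^{-1}(G_S^\circ)\widetilde{\subset}(f_{up}^{-1}(G_S))^\circ$; here one must be careful that $\overline{G_S^c}=(G_S^\circ)^c$ and that complement reverses $\widetilde{\subset}$. Finally $(5)\Rightarrow(1)$: for $G_S\in{\Large\tau}_2$ we have $G_S^\circ=G_S$ by Theorem~\ref{ic-oz}(5), so $(5)$ reads $f_{up}^{-1}(G_S)\widetilde{\subset}(f_{up}^{-1}(G_S))^\circ$; combined with the always-true reverse inclusion $(f_{up}^{-1}(G_S))^\circ\widetilde{\subset}f_{up}^{-1}(G_S)$ (Theorem~\ref{ic-oz}(2)) this gives equality, hence $f_{up}^{-1}(G_S)$ is FP-soft open, i.e.\ $f_{up}$ is FP-soft continuous.

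The routine bookkeeping is entirely driven by Theorem~\ref{fo}; the only place demanding genuine care is $(4)\Rightarrow(5)$, where the complement must be pushed through both a preimage and a closure simultaneously and the inclusion direction flips—so I would write that step out with the intermediate identity $\overline{f_{up}^{-1}(G_S^c)}=\overline{(f_{up}^{-1}(G_S))^c}$ (from Theorem~\ref{fo}(9)) displayed explicitly before complementing. Everywhere else the argument is a direct chase through monotonicity and the unit/counit inclusions $F_A\widetilde{\subset}f_{up}^{-1}f_{up}(F_A)$ and $f_{up}f_{up}^{-1}(G_S)\widetilde{\subset}G_S$, so no real obstacle arises.
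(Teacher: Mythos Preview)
Your proof is correct and follows essentially the same route as the paper: the cycle $(1)\Rightarrow(2)\Rightarrow(3)\Rightarrow(4)\Rightarrow(5)\Rightarrow(1)$ using Theorem~\ref{fo}(3),(4),(9), Theorem~\ref{ik}, and the minimality/maximality of closure/interior. The only cosmetic difference is that the paper records $(4)\Leftrightarrow(5)$ as a two-way equivalence (via Theorem~\ref{fo}(9) and Theorem~\ref{ik}) rather than just the one direction you need to close the cycle; your remark that the $(4)\Rightarrow(5)$ step should be written out with $\overline{f_{up}^{-1}(G_S^c)}=\overline{(f_{up}^{-1}(G_S))^c}$ displayed is well taken, since that is exactly where the paper is most terse.
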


\begin{proof}
(1)$\Rightarrow $(2) It is obvious from Theorem \ref{fo} (9).

(2)$\Rightarrow $(3) Let $F_{A}\in FPS(X,E)$. Since $F_{A}\widetilde{\subset 
}f_{up}^{-1}(f_{up}(F_{A}))F_{A}\widetilde{\subset }f_{up}^{-1}(\overline{%
f_{up}(F_{A})})\in {\Large \tau }_{1}^{\prime }$. Therefore we have $%
\overline{F_{A}}\widetilde{\subset }f_{up}^{-1}(\overline{f_{up}(F_{A})})$.
By Theorem \ref{fo} (4), we get $f_{up}(\overline{F_{A}})\widetilde{\subset }
$ $f_{up}(f_{up}^{-1}(\overline{f_{up}(F_{A})})\widetilde{\subset }\overline{%
f_{up}(F_{A})}$.

(3)$\Rightarrow $(4) Let $G_{S}\in FPS(Y,K)$. If we choose $%
f_{up}^{-1}(G_{S})$ instead of $F_{A}$ in (3), then $f_{up}(\overline{%
f_{up}^{-1}(G_{S})})\widetilde{\subset }\overline{f_{up}(f_{up}^{-1}(G_{S}))}%
\widetilde{\subset }\overline{G_{S}}$. Hence by Theorem \ref{fo}(3), $%
\overline{f_{up}^{-1}(G_{S})}\widetilde{\subset }f_{up}^{-1}(f_{up}(%
\overline{f_{up}^{-1}(G_{S})}))$ $\widetilde{\subset }f_{up}^{-1}(\overline{%
G_{S}})$.

(4)$\Leftrightarrow $(5) These follow from Theorem \ref{fo} (9) and \
Theorem \ref{ik}.

(5)$\Rightarrow $(1) Let $G_{S}\in {\Large \tau }_{2}$. Since $G_{S}$ is a
FP-soft open set, then $f_{up}^{-1}(G_{S})=f_{up}^{-1}(G_{S}^{\circ })%
\widetilde{\subset }(f_{up}^{-1}(G_{S}))^{\circ }\widetilde{\subset }%
f_{up}^{-1}(G_{S})$. Consequently, $f_{up}^{-1}(G_{S})$ is a FP-soft open
and so $f_{up}$ is FP-soft continuous.
\end{proof}

\begin{theorem}
Let $f_{up}:$ $(X,{\Large \tau }_{1})\rightarrow $ $(Y,{\Large \tau }_{2})$
be a FP-soft mapping and $\mathcal{B}$ be a base for ${\Large \tau }_{2}$.
Then $f_{up}$ is FP-soft continuous if and only if $f_{up}^{-1}(G_{S})\in 
{\Large \tau }_{1}$, $\forall G_{S}\in \mathcal{B}$.
\end{theorem}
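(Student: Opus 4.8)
The plan is to prove the equivalence by a straightforward two-way implication, exploiting the fact that FP-soft continuity is characterized by pre-images of open sets being open, and that every member of $\tau_2$ is, by definition of a base, a union of members of $\mathcal{B}$. The forward direction is immediate: if $f_{up}$ is FP-soft continuous, then $f_{up}^{-1}(G_S)\in\tau_1$ for every $G_S\in\tau_2$, and since $\mathcal{B}\subset\tau_2$, in particular $f_{up}^{-1}(G_S)\in\tau_1$ for every $G_S\in\mathcal{B}$.

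For the converse, I would assume $f_{up}^{-1}(G_S)\in\tau_1$ for all $G_S\in\mathcal{B}$ and take an arbitrary $G_S\in\tau_2$. Since $\mathcal{B}$ is a base for $\tau_2$, there is a subfamily $\mathcal{B}'=\{G_{S_i}:i\in J\}\subseteq\mathcal{B}$ with $G_S=\widetilde{\cup}_{i\in J}G_{S_i}$. Applying Theorem \ref{fo}(7), $f_{up}^{-1}(G_S)=f_{up}^{-1}(\widetilde{\cup}_{i\in J}G_{S_i})=\widetilde{\cup}_{i\in J}f_{up}^{-1}(G_{S_i})$. By hypothesis each $f_{up}^{-1}(G_{S_i})\in\tau_1$, so by axiom (T3) the union $\widetilde{\cup}_{i\in J}f_{up}^{-1}(G_{S_i})$ lies in $\tau_1$; hence $f_{up}^{-1}(G_S)\in\tau_1$. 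Since $G_S\in\tau_2$ was arbitrary, $f_{up}$ is FP-soft continuous.

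There is essentially no obstacle here; the only point requiring a little care is that the pre-image operator must commute with arbitrary (not just finite) unions of FP-soft sets, and this is exactly what Theorem \ref{fo}(7) provides — note that its statement is for an arbitrary index set $J$, which is what the base expansion needs. One should also make sure the index set $J$ used in the base expansion is genuinely allowed to be arbitrary (empty, in particular, which recovers $G_S=F_\varnothing$), but since both Theorem \ref{fo}(7) and axiom (T3) are stated for arbitrary $J$, this causes no difficulty, and no separate argument for the degenerate cases is needed.
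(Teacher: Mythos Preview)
Your proof is correct and is exactly the standard argument the paper has in mind; the paper itself gives no details beyond ``Straightforward,'' and your write-up simply spells out the two implications using Theorem~\ref{fo}(7) and axiom~(T3) as one would expect.
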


\begin{proof}
Straightforward.
\end{proof}

\begin{definition}
A family $\mathcal{C}$ of FP-soft sets is a cover of a FP-soft set $F_{A}$
if $F_{A}\widetilde{\subseteq }\widetilde{\cup }\left\{ F_{A_{i}}\right.
:F_{A_{i}}\in \mathcal{C},i\in J\}$. It is a FP-soft open cover if each
member of $\mathcal{C}$ is a FP-soft open set. A subcover of $\mathcal{C}$
is a subfamily of $\mathcal{C}$ which is also a cover.
\end{definition}

\begin{definition}
A family $\mathcal{C}$ of FP-soft sets has the finite intersection property
if the intersection of the members of each finite subfamily of $\mathcal{C}$
is not empty FP-soft set.
\end{definition}

\begin{definition}
A FP-soft topological space $(X,{\Large \tau })$ is FP-compact if each
FP-soft open cover of $F_{\widetilde{E}}$ has a finite subcover.,
\end{definition}

\begin{example}
Let $X=\{x_{1},x_{2},...\}$, $E=\{e_{1},e_{2},...\}$ and $%
F_{A_{n}}=\{((e_{i})_{\frac{1}{n}},X-\{x_{1},x_{2},...x_{n}\}):i=1,2,...\}$.
Then ${\Large \tau =\{}F_{A_{n}}:n=1,2,...\}\cup \{F_{\varnothing }$,$F_{%
\widetilde{E}}\}$ is a FP-soft topology on $X$ and $(X,{\Large \tau })$ is
FP-compact.
\end{example}

\begin{theorem}
A FP-soft topological space is FP-soft compact if and only if each family of
FP-soft closed sets with the finite intersection property has a non empty
FP-soft intersection.
\end{theorem}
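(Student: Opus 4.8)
The plan is to prove the equivalence by the standard contrapositive/duality argument, passing between a family of FP-soft closed sets and the family of their complements, which is a family of FP-soft open sets. Concretely, I would start from a FP-soft topological space $(X,{\Large \tau})$ and establish the two implications separately.

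First I would prove the forward direction by contraposition. Suppose $\{F_{A_{i}}:i\in J\}$ is a family of FP-soft closed sets with the finite intersection property but with $\widetilde{\bigcap}_{i\in J}F_{A_{i}}=F_{\varnothing}$. Taking complements and using Proposition 2 (the De Morgan laws for arbitrary families), the family $\{F_{A_{i}}^{c}:i\in J\}$ consists of FP-soft open sets and satisfies $\widetilde{\bigcup}_{i\in J}F_{A_{i}}^{c}=(\widetilde{\bigcap}_{i\in J}F_{A_{i}})^{c}=F_{\varnothing}^{c}=F_{\widetilde{E}}$, so it is a FP-soft open cover of $F_{\widetilde{E}}$. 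By FP-soft compactness there is a finite subset $J_{0}\subseteq J$ with $\widetilde{\bigcup}_{i\in J_{0}}F_{A_{i}}^{c}=F_{\widetilde{E}}$; complementing again gives $\widetilde{\bigcap}_{i\in J_{0}}F_{A_{i}}=F_{\varnothing}$, contradicting the finite intersection property. Hence every such family has nonempty intersection.

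Then I would prove the converse, again by contraposition. Assume $(X,{\Large \tau})$ is not FP-soft compact, so there is a FP-soft open cover $\mathcal{C}=\{F_{A_{i}}:i\in J\}$ of $F_{\widetilde{E}}$ admitting no finite subcover; that is, $\widetilde{\bigcup}_{i\in J}F_{A_{i}}=F_{\widetilde{E}}$ but $\widetilde{\bigcup}_{i\in J_{0}}F_{A_{i}}\neq F_{\widetilde{E}}$ for every finite $J_{0}\subseteq J$. Consider $\{F_{A_{i}}^{c}:i\in J\}$, a family of FP-soft closed sets. For any finite $J_{0}$, De Morgan gives $\widetilde{\bigcap}_{i\in J_{0}}F_{A_{i}}^{c}=(\widetilde{\bigcup}_{i\in J_{0}}F_{A_{i}})^{c}\neq F_{\widetilde{E}}^{c}=F_{\varnothing}$, so this family has the finite intersection property, while $\widetilde{\bigcap}_{i\in J}F_{A_{i}}^{c}=(\widetilde{\bigcup}_{i\in J}F_{A_{i}})^{c}=F_{\widetilde{E}}^{c}=F_{\varnothing}$, contradicting the hypothesis. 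This completes the equivalence.

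I do not expect any real obstacle here: the argument is the verbatim analogue of the classical duality between open covers and families of closed sets with the finite intersection property, and all the ingredients needed — that complements of open sets are exactly the closed sets, the involutivity $(F_{A}^{c})^{c}=F_{A}$ from Definition 6, and the infinitary De Morgan laws from Proposition 2 — are already available in the excerpt. The only point requiring a trace of care is the bookkeeping of "not equal to $F_{\widetilde{E}}$" versus "not equal to $F_{\varnothing}$" under complementation, but since complementation is an order-reversing involution this is immediate. Thus the proof is essentially a two-line symmetric contrapositive argument on each side.
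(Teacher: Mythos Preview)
Your proposal is correct and follows essentially the same De Morgan duality argument as the paper: both proofs hinge on the equivalence between ``$\{F_{A_i}\}$ covers $F_{\widetilde{E}}$'' and ``$\widetilde{\bigcap}_i F_{A_i}^{c}=F_{\varnothing}$'' obtained from Proposition~2 and the involutivity of complementation. The only difference is presentational---the paper records this equivalence once and then restates compactness through it, whereas you write out the two contrapositive implications explicitly---so there is nothing substantively new or missing in your approach.
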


\begin{proof}
If $\mathcal{C}$ is a family of FP-soft sets in a FP-soft topological space $%
(X,{\Large \tau })$, then $\mathcal{C}$ is a cover of $F_{\widetilde{E}}$ if
and only if one of the following conditions holds:

$(1)$ $\widetilde{\cup }\left\{ F_{A_{i}}\right. :F_{A_{i}}\in \mathcal{C}%
,i\in J\}=$ $F_{\widetilde{E}}$

$(2)$ $(\widetilde{\cup }\left\{ F_{A_{i}}\right. :F_{A_{i}}\in \mathcal{C}%
,i\in J\})^{c}=$ $F_{\widetilde{E}}^{c}=F_{\varnothing }$

$(3)$ $\widetilde{\cap }\left\{ F_{A_{i}}^{c}\right. :F_{A_{i}}\in \mathcal{C%
},i\in J\}=F_{\varnothing }$ \newline
Hence the FP-soft topological space is FP-soft compact if and only if each
family of FP-soft open sets over $X$ such that no finite subfamily covers $%
F_{\widetilde{E}}$, fails to be a cover, and this is true if and only if
each family of FP-soft closed sets which has the finite intersection
property has a nonempty FP-soft intersection.
\end{proof}

\begin{theorem}
Let $(X,{\Large \tau }_{1})$ and $(Y,{\Large \tau }_{2})$ be FP-soft
topological spaces and $f_{up}:FPS(X,E)\rightarrow FPS(Y,K)$ be a FP-soft
mapping. If $(X,{\Large \tau }_{1})$ is FP-soft compact and $f_{up}$ is
FP-soft continuous surjection, then $(Y,{\Large \tau }_{2})$ is FP-soft
compact.
\end{theorem}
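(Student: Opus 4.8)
The plan is to mimic the classical proof that a continuous image of a compact space is compact, transported to the FP-soft setting. The key tools are the behaviour of FP-soft inverse images under unions (Theorem \ref{fo}(7)), the fact that inverse images preserve FP-soft openness by FP-soft continuity, the identity $f_{up}^{-1}(G_{\widetilde{K}})=F_{\widetilde{E}}$ (Theorem \ref{fo}(11)), and the surjectivity consequence $f_{up}(f_{up}^{-1}(G_S))=G_S$ (Theorem \ref{fo}(4)).

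First I would take an arbitrary FP-soft open cover $\mathcal{C}=\{G_{S_i}:i\in J\}$ of $F_{\widetilde{K}}$ in $(Y,{\Large \tau}_2)$, so that $\widetilde{{\LARGE \cup}}_{i\in J}G_{S_i}=F_{\widetilde{K}}$ (or at least contains it, which for $F_{\widetilde K}$ forces equality). Applying $f_{up}^{-1}$ to both sides and using Theorem \ref{fo}(7) together with Theorem \ref{fo}(11), I get
\[
\widetilde{{\LARGE \cup}}_{i\in J}f_{up}^{-1}(G_{S_i})=f_{up}^{-1}\bigl(\widetilde{{\LARGE \cup}}_{i\in J}G_{S_i}\bigr)=f_{up}^{-1}(G_{\widetilde{K}})=F_{\widetilde{E}}.
\]
Since $f_{up}$ is FP-soft continuous, each $f_{up}^{-1}(G_{S_i})$ lies in ${\Large \tau}_1$, so $\{f_{up}^{-1}(G_{S_i}):i\in J\}$ is an FP-soft open cover of $F_{\widetilde{E}}$. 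By FP-soft compactness of $(X,{\Large \tau}_1)$ there is a finite subset $J_0\subseteq J$ with $\widetilde{{\LARGE \cup}}_{i\in J_0}f_{up}^{-1}(G_{S_i})=F_{\widetilde{E}}$.

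Then I would push this finite subcover forward: apply $f_{up}$ to both sides, use Theorem \ref{fo}(5) to distribute $f_{up}$ over the finite union, and use Theorem \ref{fo}(4) (with surjectivity) to simplify $f_{up}(f_{up}^{-1}(G_{S_i}))=G_{S_i}$, obtaining $\widetilde{{\LARGE \cup}}_{i\in J_0}G_{S_i}=f_{up}(F_{\widetilde{E}})$. Finally, since $f_{up}$ is surjective, Theorem \ref{fo}(13) gives $f_{up}(F_{\widetilde{E}})=G_{\widetilde{K}}$, so $\{G_{S_i}:i\in J_0\}$ is a finite subcover of $\mathcal{C}$. Hence $(Y,{\Large \tau}_2)$ is FP-soft compact.

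The only delicate point — the "main obstacle" if any — is bookkeeping around the definition of cover: a cover only requires $F_{\widetilde K}\widetilde\subseteq\widetilde{{\LARGE \cup}}G_{S_i}$, not equality, and likewise $f_{up}(F_{\widetilde E})\widetilde\subseteq G_{\widetilde K}$ in Theorem \ref{fo}(13) becomes equality only under surjectivity. So I would be careful to phrase each step with $\widetilde\subseteq$ where needed and invoke surjectivity precisely where equality is required (Theorem \ref{fo}(4) and (13)); monotonicity of $f_{up}$ and $f_{up}^{-1}$ (Theorem \ref{fo}(1),(2)) handles the inclusions cleanly. No step requires any genuinely new idea beyond these cited properties.
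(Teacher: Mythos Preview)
Your proposal is correct and follows essentially the same route as the paper's proof: pull back an FP-soft open cover of $G_{\widetilde K}$ via $f_{up}^{-1}$ to get an FP-soft open cover of $F_{\widetilde E}$, extract a finite subcover by compactness, and push it forward using surjectivity and Theorem \ref{fo}(4). Your write-up is in fact more explicit than the paper's, carefully citing Theorem \ref{fo}(5),(7),(11),(13) and addressing the $\widetilde\subseteq$ versus equality bookkeeping that the paper leaves implicit.
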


\begin{proof}
Let $\mathcal{C}=\{G_{S_{i}}:i\in J\}$ be a cover of $G_{\widetilde{K}}$ by
FP-soft open sets. Then since $f_{up}$ is FP-soft continuous, the family of
all FP-soft sets of the form $f_{up}^{-1}(G_{S})$, for $G_{S}\in \mathcal{C}$%
, is a FP-soft open cover of $F_{\widetilde{E}}$ which has a finite
subcover. However, since $f_{up}$ is surjective, then $%
f_{up}(f_{up}^{-1}(G_{S})=G_{S}$ for any FP-soft set $G_{S}$ over $Y$. Thus,
the family of images of members of the subcover is a finite subfamily of $%
\mathcal{C}$ which covers $G_{\widetilde{K}}$. Consequently, $(Y,{\Large %
\tau }_{2})$ is FP-soft compact.
\end{proof}

\textbf{Conclusion.} Topology is a branch of mathematics, whose concepts
exist not only in almost all branches of mathematics, but also in many real
life applications. In this paper, we introduce the topological structure of
fuzzy parametrized soft sets and fuzzy parametrized soft mappings. We study
some fundemental concepts in fuzzy parametrized soft topological spaces such
as closures, interiors, bases, compactness and continuity. Some basic
properties of these concepts are also presented. This paper will form the
basis for further applications of topology.

\end{document}